\documentclass[12pt, reqno]{amsart}
\usepackage{preamble}

\title[]{Maximal symplectic torus actions}
\author[]{Rei Henigman}
\address{School of Mathematical Sciences, Tel Aviv University
Ramat Aviv, Tel Aviv 69978, Israel}
\email{rei.henigman@gmail.com}
\subjclass[2020]{53D20, 53D35}

\begin{document}

\begin{abstract}
    There are several different notions of maximal torus actions on smooth manifolds, in various contexts: symplectic, Riemannian, complex. In the symplectic context, for the so-called isotropy-maximal actions, as well as for the weaker notion of almost isotropy-maximal actions, we give classifications up to equivariant symplectomorphism. These classification results give symplectic analogues of recent classifications in the complex and Riemannian contexts. Moreover, we deduce that every almost isotropy-maximal symplectic torus action is equivariantly diffeomorphic to a product of a symplectic toric manifold and a torus, answering a question of Ishida. The classification theorems are consequences of Duistermaat and Pelayo's classification of symplectic torus actions with coisotropic orbits.
\end{abstract}

\maketitle

\section{Introduction}\label{introduction_section}
Let a torus $T$ act on a closed connected smooth manifold $M$. We assume that all actions in this paper are smooth and effective, and that all manifolds are closed and connected. When $M$ is a symplectic manifold, the action is \textbf{symplectic} if it preserves the symplectic form. A symplectic torus action is \textbf{Hamiltonian} if it is generated by a momentum map, that is, a $T$-invariant map $\mu:M \rightarrow \mathfrak{t}^*$ which satisfies
\begin{equation*}
    -d\langle \mu, \xi\rangle = \iota_{X_{\xi}} \omega
\end{equation*}
for every $\xi$ in the Lie algebra $\mathfrak{t}$ of the torus $T$, where $X_{\xi}$ is the corresponding vector field on $M$.

The celebrated convexity theorem of Atiyah \cite{atiyah_convexity} and Guillemin-Sternberg \cite{gs_convexity} states that the image of a momentum map is a convex polytope in $\mathfrak{t}^*$. Building on this, Delzant \cite{delzant} gave a full classification of symplectic $2n$-manifolds with Hamiltonian $T^n$ actions, which are known as \textbf{symplectic toric manifolds}, up to equivariant symplectomorphism. His theorem inspired many equivariant classification results in the field. Delzant's classification was generalized by Lerman and Tolman \cite{lerman_tolman} to orbifolds, and by Karshon and Lerman \cite{lerman_karshon} to non-compact manifolds. Furthermore, Karshon and Tolman (\cite{centered_hamiltonians, tall_uniqueness, tall_existence}) classified symplectic $2n$-manifolds with Hamiltonian $T^{n-1}$ actions, up to equivariant symplectomorphism, under the assumption that all of the reduced spaces are of dimension two (see also \cite{karshon_periodic, ahara_hattori, audin_paper, orbifold_ham_s1}). Additionally, several equivariant classification theorems were proved for symplectic non-Hamiltonian torus actions, see, e.g., \cite{duistermaat_pelayo, benoist, pelayo_two_tori, classification_non_ham_s1}.
\\

A main theme in these classification results is that the higher the dimension of the torus (the number of symmetries) with respect to the dimension of the manifold, the easier it becomes to characterize the space. In this paper, we are interested in \textit{maximal} symplectic torus actions. There are several existing notions of maximality, and we explore them in the context of symplectic geometry.
\\

Restricting to some class of torus actions on manifolds of a fixed dimension, 
one can refer to an action as \textbf{dimension-maximal} if the dimension of the torus is the largest out of all such actions. For example, one may restrict to actions that preserve some geometric structure (e.g., symplectic, Riemannian, complex), and one may put some restriction on the manifolds (e.g., simply-connected, positively-curved).

Grove and Searle \cite{grove_searle} classified dimension-maximal isometric torus actions on positively-curved Riemannian manifolds. Subsequently, Galaz-Garc\'ia and Searle \cite{galaz_searle} formalized the \textit{Maximal Symmetry Rank Conjecture}, which describes the expected classification of dimension-maximal isometric torus actions on simply-connected non-negatively curved Riemannian manifolds.

In symplectic geometry, dimension-maximal actions are relatively straightforward to understand. When considering symplectic torus actions, the only dimension-maximal action is the natural action of a torus on itself. If one restricts to Hamiltonian actions, then the highest dimension of a Hamiltonian $T$ action on a symplectic manifold $(M, \omega)$ is $\dim T = \dim M/2$. Therefore, dimension-maximal Hamiltonian torus actions are precisely symplectic toric manifolds, which were classified by Delzant \cite{delzant}. If we consider dimension-maximal symplectic torus actions on simply-connected symplectic manifolds, we again get that these are exactly symplectic toric manifolds.
\\

Recently, interesting relations were established between dimension-maximal actions and isotropy-maximal actions, introduced by Ishida \cite{ishida} (also called \textit{slice-maximal} in \cite{galaz_kerin_radeschi_wiemeler, galaz_kerin_radeschi}). A torus action is called \textbf{isotropy-maximal} if there exists a point $x \in M$ which satisfies
\begin{equation}\label{maximal_equation}
    \dim T + \dim T_x = \dim M,
\end{equation}
where $T_x$ is the stabilizer of $x$ in $T$. The name isotropy-maximal comes from the fact that the dimension of an isotropy group $T_x$ is always bounded from above by $\dim M - \dim T$ (see Remark \ref{isotropy_maximal_inequality_remark}). For more information, see \cite[Section 2]{ishida} and \cite[Section 2.2]{escher_searle}. More generally, following \cite{escher_searle}, an action is called \textbf{almost isotropy-maximal} if there exists a point $x \in M$ which satisfies
\begin{equation}\label{almost_maximal_equation}
    \dim T + \dim T_x \ge \dim M - 1,
\end{equation}
where $T_x$ is the stabilizer of $x$ in $T$. In particular, every isotropy-maximal action is also almost isotropy-maximal. An almost isotropy-maximal action that is not isotropy-maximal is called \textbf{strictly almost isotropy-maximal} (see \cite{dong_escher_searle}).

Grove and Searle's classification \cite{grove_searle}, mentioned above, implies that an isometric action on a positively-curved Riemannian manifold is isotropy-maximal if and only if it is dimension-maximal. Inspired by this, Escher and Searle \cite{escher_searle} classified isotropy-maximal isometric torus actions on simply-connected non-negatively curved Riemannian manifolds, and proved that the maximal symmetry rank conjecture is equivalent to the assertion that every dimension-maximal action is isotropy-maximal (in their context). Their classification generalizes earlier results, see \cite{wiemeler, galaz_kerin_radeschi_wiemeler}. Recently, Dong, Escher, and Searle \cite{dong_escher_searle} generalized this classification from isotropy-maximal actions to almost isotropy-maximal actions.

In the complex category, Ishida \cite{ishida} gave a full classification of isotropy-maximal complex torus actions on complex manifolds, up to equivariant biholomorphism (see also \cite{ishida_karshon}). These torus actions generalize LVM manifolds \cite{lvm_1, lvm_2}, or more generally LVMB manifolds \cite{lvmb}. Ishida also showed that if a Kähler manifold admits an isotropy-maximal complex torus action, then it is equivariantly diffeomorphic to a product of a projective nonsingular toric variety and a torus. This led him to ask whether every symplectic manifold, equipped with an isotropy-maximal symplectic torus action, is diffeomorphic to a product of a projective nonsingular toric variety and a torus. See Problem 12.4 in \cite{ishida}.
\\

The main result of this paper, is a full classification of almost isotropy-maximal symplectic torus actions, up to equivariant symplectomorphism (see Theorems \ref{almost_uniqueness_theorem} and \ref{almost_existence_theorem}). This includes the special case of isotropy-maximal actions, for which we provide a simplified version of the classification (see Theorems \ref{maximal_uniqueness_theorem} and \ref{maximal_existence_theorem}). These classifications give symplectic analogues of the classifications of isotropy-maximal and almost isotropy-maximal actions in \cite{ishida, escher_searle, dong_escher_searle}. In particular, we deduce that every almost isotropy-maximal symplectic torus action is equivariantly diffeomorphic to a product of a symplectic toric manifold and a torus. This gives a positive answer to Ishida's question, and to its generalization to almost isotropy-maximal actions.
\\

The proof of the classification theorem is based on the following key proposition which we prove in Section \ref{maximal_is_coisotropic_proof_section}: every almost isotropy-maximal symplectic torus action has a coisotropic orbit (see Proposition \ref{maximal_is_coisotropic_proposition}). Our classification theorem then follows from Duistermaat and Pelayo's \cite{duistermaat_pelayo} classification of symplectic torus actions with coisotropic orbits. In Lemma \ref{which_coisotropic_are_maximal_lemma}, we give simple conditions on which symplectic torus actions with coisotropic orbits are isotropy-maximal or almost isotropy-maximal.
\\

Lastly, we strengthen the relation between isotropy-maximal actions and the problem of extending a torus action to a higher dimensional torus action. While isotropy-maximal actions can never be extended to higher dimensional torus actions (this follows from Equation \eqref{inequality_isotropy}), the other direction is not true in general. A trivial example is the trivial action of a zero-dimensional torus on a surface $\Sigma_g$ of genus $g \ge 2$, which is not isotropy-maximal, but cannot be extended since $\Sigma_g$ does not admit any circle action. For non-trivial examples, see Section \ref{examples_section}. If one restricts to Hamiltonian actions, Karshon \cite{karshon_periodic} constructed Hamiltonian $S^1$ actions in dimension four which do not extend to Hamiltonian $T^2$ actions. These are also not isotropy-maximal. Similarly, see \cite{extending_tall_complexity_one, extending_short_complexity_one} for extensions of Hamiltonian $T^{n-1}$ actions in dimension $2n$, to Hamiltonian $T^n$ actions.

On the contrary, for almost isotropy-maximal actions, we show that the other direction is always true. More precisely, from our classification, we deduce that every almost isotropy-maximal symplectic torus action can be extended to an isotropy-maximal action (see Corollary \ref{almost_isotropy_maximal_lifting_corollary}). This is analogous to a recent result of Dong-Escher-Searle in the context of simply-connected non-negatively curved Riemannian manifolds, see \cite[Theorem A]{dong_escher_searle}.

\subsubsection*{Acknowledgements}
I thank my advisor Yael Karshon for suggesting me this problem, and for very useful discussions, as always. Furthermore, I thank my advisor Leonid Polterovich for crucial comments and suggestions. Finally, I thank Mark Berezovik, Joé Brendel, and the anonymous referee for useful feedback on this manuscript.

This research is partially supported by an NSF-BSF Grant 2021730, and by an ISF-NSFC Joint Research Program Grant/Award Number 3231/23.

\section{Main results}\label{results_section}
Before stating the main results, we recall some definitions.
By the Atiyah-Guillemin-Sternberg convexity theorem (\cite{atiyah_convexity}, \cite{gs_convexity}), the image of a momentum map is a convex polytope. A \textbf{symplectic toric manifold} is a $2n$-dimensional symplectic manifold, equipped with a Hamiltonian $T^n$ action. By \cite{delzant}, symplectic toric manifolds are classified up to equivariant symplectomorphism, by their momentum polytopes, which are called \textbf{Delzant polytopes}.
\\

The following are examples of isotropy-maximal and almost isotropy-maximal symplectic torus actions.
\begin{example}\label{maximal_example_delzant}
    Let $(M, \omega, \mu)$ be a symplectic toric manifold. The action must have a fixed point, and therefore by Equation \eqref{maximal_equation}, the action is isotropy-maximal. As Ishida notes in his paper, every isotropy-maximal symplectic torus action that have at least one fixed point is a symplectic toric manifold (this follows from \cite[Theorem 3.13]{giacobbe} or \cite[Remark 2 (4)]{ishida_karshon}).
\end{example}
\begin{remark}
    In contrast to the case of symplectic toric manifolds, where the torus is of the maximal dimension $\dim T = \dim M/2$, a Hamiltonian $T$ action on a $2n$-dimensional symplectic manifold with $\dim T < \dim M/2$ is not even almost isotropy-maximal. This follows since $\dim T + \dim T_x$ is always bounded from above by $\dim M - 2$, and so Equation \eqref{almost_maximal_equation} can never be satisfied.
\end{remark}
\begin{example}\label{maximal_example_torus}
    Let $T^{2n}$ be a torus, given with a translation-invariant symplectic form $\omega$. Then the natural action of $T^{2n}$ on itself preserves $\omega$. Because the action is free, by Equation \eqref{maximal_equation}, the action is isotropy-maximal. Furthermore, if $H \subset T^{2n}$ is a subtorus of dimension $2n-1$, then its action on $T^{2n}$ is an almost isotropy-maximal symplectic torus action.
\end{example}
\begin{example}\label{product_example}
    Let $(M, \omega)$ and $(M', \omega')$ be symplectic manifolds, equipped with isotropy-maximal symplectic torus actions of $T$ and $T'$. Then $(M \times M', \omega + \omega')$ is a symplectic manifold, equipped with an isotropy-maximal symplectic torus action of $T \times T'$ that acts by $(t, t') \cdot (x, y) = (t \cdot x, t' \cdot y)$.
\end{example}

The orbits of a smooth torus action on a connected manifold $M$, which have the smallest stabilizer, are called \textbf{principal orbits}, and their union form an open and dense subset of $M$. Since our action is effective, an orbit is principal if and only if it is free. See \cite[Appendix B, Section 3.5]{ggk_book} for more information.

An orbit $\mathcal{O}$ of a symplectic torus action on a symplectic manifold $(M, \omega)$ is called \textbf{coisotropic} if for every point $x \in \mathcal{O}$, the tangent space $T_x\mathcal{O}$ of the orbit $\mathcal{O}$ is a coisotropic subspace of $T_x M$. We make the observation that all of the above examples of almost isotropy-maximal actions have coisotropic principal orbits. We prove that this is in fact always the case for almost isotropy-maximal symplectic torus actions.
\begin{proposition}\label{maximal_is_coisotropic_proposition}
    Let $(M, \omega)$ be a symplectic manifold, equipped with an almost isotropy-maximal symplectic torus action. Then the principal orbits of the torus action are coisotropic.
\end{proposition}
\begin{remark}
    Proposition \ref{maximal_is_coisotropic_proposition} is tight in the following sense. There exist symplectic torus actions with no coisotropic orbits, satisfying the equality
    \begin{equation*}
        \dim T + \dim T_x = \dim M - 2
    \end{equation*}
    at some point $x \in M$. For example, every Hamiltonian $T^{n-1}$ action on a $2n$-dimensional symplectic manifold satisfies this equality at every fixed point, and has no coisotropic orbits. Another example is the free action of $T^2$ on $(T^4, dp_1\wedge dq_1 + dp_2\wedge dq_2)$, which rotates the $p_1$ and $q_1$ coordinates. See also \cite{pelayo_two_tori} for the list of all symplectic $T^2$ actions on $4$-manifolds, all of whose orbits are symplectic (and therefore have no coisotropic orbits).
\end{remark}
\begin{remark}
    By \cite[Proposition 5.1]{benoist}, a symplectic torus action has at least one coisotropic principal orbit if and only if every principal orbit is coisotropic.
\end{remark}
We prove Proposition \ref{maximal_is_coisotropic_proposition} in Section \ref{maximal_is_coisotropic_proof_section}. The main ingredient in the proof is the unique antisymmetric bilinear form $\sigma$ defined on the Lie algebra $\mathfrak{t}$ of the acting torus $T$, introduced in \cite{benoist, duistermaat_pelayo}, see the definition below (Equation \eqref{bilinear_form_equation}).
\\

In light of Proposition \ref{maximal_is_coisotropic_proposition}, we can apply tools from Duistermaat and Pelayo's work \cite{duistermaat_pelayo} on the classification of symplectic torus actions with coisotropic principal orbits (see also \cite{benoist}). In their work, they define six invariants for symplectic torus actions with coisotropic principal orbits, and show that two spaces are equivariantly symplectomorphic if and only if their six invariants agree. Moreover, they construct a space for every choice of values for the six invariants. In the following paragraphs, we describe the first three invariants of their work, which are relevant for the statements of Lemma \ref{which_coisotropic_are_maximal_lemma}, Theorem \ref{maximal_uniqueness_theorem}, and Theorem \ref{maximal_existence_theorem}. Later in this section, we give a brief description of the other three invariants. For more details see \cite[Section 9]{duistermaat_pelayo}.
\\

Let $T$ act symplectically on a symplectic manifold $(M, \omega)$, with coisotropic principal orbits. The \textbf{first invariant} in the Duistermaat-Pelayo classification is the antisymmetric bilinear form $\sigma$ on the Lie algebra $\mathfrak{t}$ of the torus $T$, given by
\begin{equation}\label{bilinear_form_equation}
    \sigma(X, Y) = \omega_p(X_M, Y_M),
\end{equation}
for any arbitrary point $p \in M$, and every $X, Y \in \mathfrak{t}$, where $X_M, Y_M \in \mathfrak{X}(M)$ are the corresponding vector fields on $M$. The bilinear form $\sigma$ is well defined by \cite[Lemma 2.1]{duistermaat_pelayo} (see also \cite[Lemme 2.1]{benoist} and \cite[Section 3]{ginzburg_symplectic_actions}).

The \textbf{second invariant} in the Duistermaat-Pelayo classification is the (unique) maximal subtorus $T_h$ of $T$ which acts in a Hamiltonian fashion (see \cite[Lemma 3.6]{duistermaat_pelayo}). Its Lie subalgebra $\mathfrak{t}_h$ is contained in the kernel of the form $\sigma$, defined by
\begin{equation*}
    \ker(\sigma) := \{X \in \mathfrak{t}\ \mid\ \forall\ Y \in \mathfrak{t}\colon\  \sigma(X, Y) = 0\}.
\end{equation*}

The \textbf{third invariant} in the Duistermaat-Pelayo classification is the image $\Delta \subset \mathfrak{t}_h^*$ of a momentum map $\mu:M \rightarrow \mathfrak{t}_h^*$ that generates the Hamiltonian action of $T_h$ on $M$, where we choose $\mu$ such that the center of mass of its image $\Delta$ is the origin. By \cite[Corollary 3.11]{duistermaat_pelayo}, the image $\Delta$ is a Delzant polytope.
\\

By Proposition \ref{maximal_is_coisotropic_proposition}, every almost isotropy-maximal symplectic $T$ action has coisotropic principal orbits. Hence, every almost isotropy-maximal symplectic $T$ action has well defined values for the six invariants, and in particular, well defined bilinear form $\sigma$, subtorus $T_h \subset T$, and Delzant polytope $\Delta \subset \mathfrak{t}_h^*$.

To deduce a classification theorem for almost isotropy-maximal symplectic torus actions from Duistermaat and Pelayo's classification, we need to determine which symplectic torus actions with coisotropic principal orbits are almost isotropy-maximal. This is the content of the following lemma, which we prove in Section \ref{lemma_proof_section}.
\begin{lemma}\label{which_coisotropic_are_maximal_lemma}
    Let $(M, \omega)$ be a symplectic manifold, equipped with a symplectic torus action with coisotropic principal orbits. Then the action is almost isotropy-maximal if and only if $\dim\mathfrak{t}_h \ge \dim\ker(\sigma) - 1$. Moreover, it is isotropy-maximal if and only if $\mathfrak{t}_h = \ker(\sigma)$.
\end{lemma}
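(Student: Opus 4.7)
The plan is to translate the isotropy condition at a point $x$ into a Lie-algebraic statement relating $\mathfrak{t}_x$, $\mathfrak{t}_h$, and $\ker(\sigma)$, and then deduce the lemma from a short chain of inclusions together with a dimension count for $\ker(\sigma)$. At any point $x$ on a principal orbit $\mathcal{O}$, effectiveness of the action makes $\xi \mapsto X_\xi(x)$ a linear isomorphism $\mathfrak{t} \cong T_x\mathcal{O}$ pulling $\omega|_{T_x\mathcal{O}}$ back to $\sigma$; coisotropy of $\mathcal{O}$ forces $(T_x\mathcal{O})^\omega \subset T_x\mathcal{O}$, so the radical of $\omega|_{T_x\mathcal{O}}$ equals $(T_x\mathcal{O})^\omega$ itself, of dimension $\dim M - \dim T$. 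Hence $\dim\ker(\sigma) = \dim M - \dim T$, and the (almost) isotropy-maximal condition at $x$ translates to $\dim\mathfrak{t}_x = \dim\ker(\sigma)$ (resp.\ $\dim\mathfrak{t}_x \ge \dim\ker(\sigma) - 1$).

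I would next establish two inclusions valid at every $x \in M$. The first, $\mathfrak{t}_x \subset \ker(\sigma)$, is immediate from \eqref{bilinear_form_equation} by taking the reference point $p = x$: for $\xi \in \mathfrak{t}_x$ one has $X_\xi(x) = 0$, hence $\sigma(\xi, \eta) = 0$ for every $\eta$. The second, $\mathfrak{t}_x \subset \mathfrak{t}_h$, is the only substantive input: it says that every infinitesimal stabilizer lies in the maximal Hamiltonian subtorus, and it follows from Duistermaat-Pelayo's structural analysis of symplectic torus actions with coisotropic orbits (e.g.\ from their local symplectic tube model around $x$). Combining these with the inclusion $\mathfrak{t}_h \subset \ker(\sigma)$ recalled in Section \ref{results_section} yields the chain $\mathfrak{t}_x \subset \mathfrak{t}_h \subset \ker(\sigma)$ at every $x$.

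Both halves of the lemma then fall out. For the ``if'' directions, assuming $\mathfrak{t}_h = \ker(\sigma)$ (resp.\ $\dim\mathfrak{t}_h \ge \dim\ker(\sigma) - 1$), the Atiyah-Guillemin-Sternberg convexity theorem applied to the Hamiltonian $T_h$-action furnishes a $T_h$-fixed point $x$; at such $x$ one has $\mathfrak{t}_h \subset \mathfrak{t}_x$, and sandwiching with $\mathfrak{t}_x \subset \ker(\sigma)$ gives $\mathfrak{t}_x = \ker(\sigma)$ (resp.\ $\dim\mathfrak{t}_x \ge \dim\ker(\sigma) - 1$), hence (almost) isotropy-maximality. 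For the ``only if'' directions, (almost) isotropy-maximality at some $x$ forces $\dim\mathfrak{t}_x$ to meet the corresponding bound, and the chain $\mathfrak{t}_x \subset \mathfrak{t}_h \subset \ker(\sigma)$ transfers the bound from $\mathfrak{t}_x$ to $\mathfrak{t}_h$.

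The main obstacle is the inclusion $\mathfrak{t}_x \subset \mathfrak{t}_h$; everything else is essentially a dimension count. Should a ready-to-cite form of this fact not be available in \cite{duistermaat_pelayo}, I would prove it by hand using the equivariant symplectic slice theorem to linearize the action near $x$: the slice representation decomposes into symplectic weight spaces for $T_x$, and since $T_x$ fixes $x$ its linearized action on the slice admits a momentum map, which glues with the $T$-equivariance of the tube to produce a $T_x$-equivariant momentum map on a $T$-invariant neighborhood, yielding $\mathfrak{t}_x \subset \mathfrak{t}_h$.
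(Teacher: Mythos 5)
Your argument is correct and follows essentially the same skeleton as the paper's: use coisotropy of a principal orbit to get $\dim\ker(\sigma)=\dim M-\dim T$ (this is \cite[Lemma 2.3]{duistermaat_pelayo}, which you re-derive by hand), translate (almost) isotropy-maximality into a condition on $\max_x\dim T_x$, use $\mathfrak{t}_x\subset\ker(\sigma)$ for the upper bound, and use Atiyah--Guillemin--Sternberg to produce a point with $\dim T_x\ge\dim T_h$. The one place you diverge is in bounding $\dim T_x$ from above by $\dim T_h$: you invoke the inclusion $\mathfrak{t}_x\subset\mathfrak{t}_h$, whereas the paper instead uses the existence of a freely acting subtorus $T_f$ complementary to $T_h$ (\cite[Section 5.2]{duistermaat_pelayo}), which gives $\dim T_x\le\dim T-\dim T_f=\dim T_h$ without needing the inclusion. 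Both facts are genuinely part of Duistermaat--Pelayo's structure theory (in their setup $\mathfrak{t}_h$ is built from the stabilizer subalgebras, so your inclusion is available to cite), so either route is fine. One caveat on your fallback plan: the slice-theorem argument only produces a momentum map for $T_x$ on a $T$-invariant \emph{neighborhood} of the orbit, and a locally Hamiltonian symplectic action with fixed points need not be globally Hamiltonian (the obstruction $[\iota_{X_\xi}\omega]\in H^1(M;\mathbb{R})$ does not vanish just because $X_\xi$ has a zero); killing that obstruction is exactly where the coisotropic-orbit hypothesis enters in Duistermaat--Pelayo, so you should cite their result rather than rely on the local linearization.
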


In the special case of isotropy-maximal actions, the classification takes a particularly simple form. To be precise, in this case we prove that the values of the six Duistermaat-Pelayo invariants are determined only by the first invariant $\sigma$ and the third invariant $\Delta$. Therefore, $\sigma$ and $\Delta$ alone determine whether two isotropy-maximal symplectic torus actions are equivariantly symplectomorphic.
\begin{theorem}\label{maximal_uniqueness_theorem}
    Let $(M, \omega)$ and $(M', \omega')$ be symplectic manifolds, equipped with isotropy-maximal symplectic torus actions. Then they are equivariantly symplectomorphic if and only if their bilinear forms $\sigma$ and $\sigma'$ and their Delzant polytopes $\Delta$ and $\Delta'$ are the same.
\end{theorem}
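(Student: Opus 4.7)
The plan is to deduce Theorem~\ref{maximal_uniqueness_theorem} directly from Theorem~\ref{almost_uniqueness_theorem}. Since an isotropy-maximal action is in particular almost isotropy-maximal, two such actions are equivariantly symplectomorphic if and only if all six Duistermaat--Pelayo invariants agree. It therefore suffices to show that the four invariants other than $\sigma$ and $\Delta$ are either trivial or determined by $\sigma$ and $\Delta$ (together with the integral lattice in $\mathfrak{t}$, which is part of the data of the acting torus $T$).

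Two of the four invariants are handled immediately. Lemma~\ref{which_coisotropic_are_maximal_lemma} gives $\mathfrak{t}_h = \ker(\sigma)$ in the isotropy-maximal case, so the Hamiltonian-subalgebra invariant is read off from $\sigma$. Remark~\ref{fifth_invariant_remark} states that the fifth invariant $c$ is already trivial in the wider almost isotropy-maximal class. The remaining two invariants in \cite{duistermaat_pelayo} encode an ``admissible'' lattice and a discrete holonomy group $N$ that records how a certain principal $T$-bundle over the orbit space is twisted. My strategy is to show, by tracing these invariants at an isotropy-maximal point $x \in M$, that $N$ is trivial and the admissible lattice coincides with the one determined by $\sigma$ and the integral lattice of $\mathfrak{t}$. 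The key input is that at such $x$, the stabilizer $T_x$ is a subtorus whose Lie algebra complements $\ker(\sigma) = \mathfrak{t}_h$, so the local slice theorem at $x$ should already supply a global equivariant trivialization of the bundle, effectively exhibiting $M$ as a product of the symplectic toric manifold $M_\Delta$ and a torus carrying the translation-invariant form determined by $\sigma|_{\mathfrak{t}_x}$.

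I expect the main difficulty to be the vanishing of the holonomy group $N$. For strictly almost isotropy-maximal actions, $N$ can be nontrivial, reflecting exactly one extra dimension of non-Hamiltonian freedom not seen by $\sigma$ and $\Delta$; this is precisely why one cannot descend to only two invariants in Theorem~\ref{almost_uniqueness_theorem}. Establishing that the strict equality $\dim T + \dim T_x = \dim M$ forces $N$ to vanish will require unpacking Duistermaat and Pelayo's construction of $N$ in \cite[Section 9]{duistermaat_pelayo} (or the overview in Section~\ref{proofs_of_class_section}), most likely via a dimension count at the isotropy-maximal orbit showing that the relevant period map is an isomorphism onto the standard lattice.
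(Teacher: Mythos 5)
Your overall route is the paper's: reduce to Theorem \ref{almost_uniqueness_theorem} and show that, in the isotropy-maximal case, the four invariants other than $\sigma$ and $\Delta$ carry no information, with Lemma \ref{which_coisotropic_are_maximal_lemma} supplying $\mathfrak{t}_h = \ker(\sigma)$. However, the step you single out as the main difficulty --- the vanishing of $N$ --- is not a difficulty at all, and your plan for it (slice theorem at an isotropy-maximal point, a period map, a global equivariant trivialization of a principal bundle) is both unnecessary and based on a misreading of what $N$ is. In Duistermaat--Pelayo's list (reproduced in Section \ref{proofs_of_class_section}), $N$ is not an independent invariant and is not a ``discrete holonomy group'': it is the vector space $N := (\ker(\sigma)/\mathfrak{t}_h)^*$, defined directly from the first two invariants. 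Once Lemma \ref{which_coisotropic_are_maximal_lemma} gives $\mathfrak{t}_h = \ker(\sigma)$, the quotient $\ker(\sigma)/\mathfrak{t}_h$ is zero, hence $N = 0$; the fourth invariant $P \subset N$ is then forced to be the trivial group, the fifth invariant $c: N \times N \rightarrow \ker(\sigma) \cap T_{\mathbb{Z}}$ is the zero map, and the sixth invariant $\bar\tau$, which lives in a quotient of a set of maps out of $P$, is trivial as well. That one line is the entire content of the remaining step; no dimension count at an isotropy-maximal orbit is needed.

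Two smaller points. First, citing Remark \ref{fifth_invariant_remark} for the triviality of $c$ is circular, since that remark explicitly defers to the proof of this very theorem; but as just noted, $c$ dies for free once $N = 0$, so nothing is lost. Second, your intuition about the strictly almost isotropy-maximal case is accurate: there $\dim \mathfrak{t}_h = \dim\ker(\sigma) - 1$, so $N$ is one-dimensional and $P$ is a genuine lattice invariant, which is exactly why the two-invariant statement fails outside the isotropy-maximal class.
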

Furthermore, we give a simple construction for isotropy-maximal symplectic torus actions as products of symplectic toric manifolds and tori.

\begin{theorem}\label{maximal_existence_theorem}
    Let $T$ be a torus with Lie algebra $\mathfrak{t}$, and let $T_h$ be a subtorus with Lie subalgebra $\mathfrak{t}_h$. Let $\sigma$ be an antisymmetric bilinear form on $\mathfrak{t}$ with $\ker(\sigma) = \mathfrak{t}_h$. Moreover, let $\Delta \subset \mathfrak{t}_h^*$ be a Delzant polytope with center of mass at the origin, and let $(M_h, \omega_h, T_h)$ be a symplectic toric manifold with momentum image $\Delta$. Choose a complementary subtorus $T_f$ to $T_h$ in $T$, and let $\omega_f$ be the symplectic form on $T_f$ that is induced from $\sigma$. 
    
    Then the action of $T \cong T_h \times T_f$ on the product $(M_h \times T_f, \omega_h + \omega_f)$ is an isotropy-maximal symplectic action with bilinear form $\sigma$ and Delzant polytope $\Delta$.
\end{theorem}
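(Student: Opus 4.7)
The plan is to construct the desired space directly as a product, building each factor from the given data. First, by Delzant's theorem \cite{delzant}, there exists a symplectic toric manifold $(M_h, \omega_h, T_h)$ whose momentum polytope is $\Delta$; since $\Delta$ has center of mass at the origin, we may normalize the momentum map $\mu_h$ so that the same holds for it. Second, I pick a complementary subtorus $T_f \subset T$ with $T = T_h \times T_f$, which exists because the short exact sequence $1 \to T_h \to T \to T/T_h \to 1$ of tori splits (equivalently, the induced sequence of integer lattices splits, since the quotient lattice is free abelian).

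Next, I build the symplectic form on $T_f$. The hypothesis $\ker(\sigma) = \mathfrak{t}_h$ together with $\mathfrak{t}_f \cap \mathfrak{t}_h = \{0\}$ implies that the restriction $\sigma|_{\mathfrak{t}_f \times \mathfrak{t}_f}$ is nondegenerate: if $X \in \mathfrak{t}_f$ satisfies $\sigma(X, Y_f) = 0$ for all $Y_f \in \mathfrak{t}_f$, then, using $\sigma(X, Y_h) = 0$ for all $Y_h \in \mathfrak{t}_h \subset \ker(\sigma)$, we conclude $X \in \ker(\sigma) \cap \mathfrak{t}_f = \mathfrak{t}_h \cap \mathfrak{t}_f = \{0\}$. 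I define $\omega_f$ to be the translation-invariant $2$-form on $T_f$ whose value at the identity is $\sigma|_{\mathfrak{t}_f \times \mathfrak{t}_f}$. Since $T_f$ is abelian, any translation-invariant form is automatically closed; nondegeneracy of $\sigma|_{\mathfrak{t}_f}$ then makes $\omega_f$ a $T_f$-invariant symplectic form on $T_f$.

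Finally, I verify that the product action of $T \cong T_h \times T_f$ on $(M_h \times T_f, \omega_h \oplus \omega_f)$ has the claimed invariants. The toric manifold $M_h$ has a $T_h$-fixed point $x_0$ (a vertex of $\Delta$), and $T_f$ acts freely on itself, so the stabilizer at $(x_0, y)$ is exactly $T_h \times \{e\}$. The dimension count
\begin{equation*}
    \dim T + \dim T_{(x_0, y)} = (\dim T_h + \dim T_f) + \dim T_h = 2\dim T_h + \dim T_f = \dim(M_h \times T_f)
\end{equation*}
verifies isotropy-maximality. To compute the bilinear form associated to the product action, I evaluate at $(x_0, y)$: the vector fields generated by $\mathfrak{t}_h$ vanish at $x_0$, so the $\omega_h$-contribution is zero, and the $\omega_f$-contribution equals $\sigma|_{\mathfrak{t}_f}$ by construction; combined with the vanishing of $\sigma$ on $\mathfrak{t}_h$, this reproduces $\sigma$ on all of $\mathfrak{t}$. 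Lastly, the maximal Hamiltonian subtorus of the product action has Lie algebra contained in $\ker(\sigma) = \mathfrak{t}_h$, while $T_h$ itself visibly acts Hamiltonianly (via $\mu_h \circ \mathrm{pr}_{M_h}$), so the two coincide and the Delzant polytope of the action is the image of $\mu_h$, namely $\Delta$.

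The construction is essentially bookkeeping once the splitting $T = T_h \times T_f$ is chosen; the only nontrivial observations are that $\sigma|_{\mathfrak{t}_f}$ is nondegenerate (a consequence of $\ker(\sigma) = \mathfrak{t}_h$) and that the maximal Hamiltonian subtorus of the product is identified with $T_h$ (sandwiched between $T_h$ and the Hamiltonian subtorus of $\ker(\sigma) = \mathfrak{t}_h$). I therefore do not anticipate a serious obstacle.
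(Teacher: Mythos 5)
Your proposal is correct and follows essentially the same route as the paper: take the Delzant manifold for $\Delta$, choose a complementary subtorus $T_f$, equip it with the translation-invariant form induced by $\sigma$ (nondegenerate because $\ker(\sigma)=\mathfrak{t}_h$ is transverse to $\mathfrak{t}_f$), and check the invariants of the product action. The only differences are cosmetic — you verify nondegeneracy by a direct kernel computation rather than a rank count, and you justify the identification of the maximal Hamiltonian subtorus with $T_h$ slightly more explicitly.
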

Together, Theorems \ref{maximal_uniqueness_theorem} and \ref{maximal_existence_theorem} give a full classification of isotropy-maximal symplectic torus actions up to equivariant symplectomorphism. See Section \ref{proofs_of_class_section} for the proofs.

From Theorems \ref{maximal_uniqueness_theorem} and \ref{maximal_existence_theorem}, we immediately deduce that every isotropy-maximal symplectic torus action is equivariantly symplectomorphic to a product of a symplectic toric manifold and a torus, giving a positive answer to Ishida's question (Problem 12.3 in \cite{ishida}).
\\

Before stating the classification of almost isotropy-maximal symplectic torus actions, we first discuss the last three invariants in the classification of actions with coisotropic orbits in \cite{duistermaat_pelayo}. A reader might choose to skip the following paragraphs and jump straight to the statements of Theorems \ref{almost_uniqueness_theorem} and \ref{almost_existence_theorem} in their first reading.

Let $\evmap:H_1(M, \mathbb{Z}) \rightarrow \ker(\sigma)^*$ be the map given by $\evmap(A)(\xi) := \int_A \iota_{X_{\xi}} \omega$, where $X_{\xi} \in \mathfrak{X}(M)$ is the vector field on $M$ corresponding to $\xi \in \ker(\sigma)$. Then the \textbf{fourth invariant} in the Duistermaat-Pelayo classification is the subgroup of periods $P := \image(\evmap)$ of $\ker(\sigma)^*$. Because $\iota_{X_{\xi}} \omega$ is exact whenever $\xi$ is in $\mathfrak{t}_h$, we can view $P$ as a subgroup of the vector space $N :=(\ker(\sigma) /\mathfrak{t}_h)^*$. By \cite[Proposition 3.8]{duistermaat_pelayo}, $N/P$ is compact, and there exists a $T$-invariant map $\mu:M \rightarrow \Delta \times N/P$ which induces a homeomorphism $\bar \mu:M/T \rightarrow \Delta \times N/P$. If $\exp(\ker(\sigma))$ happens to be a subtorus $H$ of $T$, the map $\mu$ is just a cylinder-valued momentum map for the $H$-action on $M$ (see for example \cite{cylinder_momentum_map} or \cite[Section 5.2]{ratiu_ortega_book}).

Let $M_{\free} \subset M$ be the union of free orbits in $M$, and let $\mu_{\free}:M_{\free} \rightarrow \Delta \times N/P$ be the restriction of $\mu$ to $M_{\free}$. Following \cite[Remark 5.7]{duistermaat_pelayo}, the image of the map $\mu_{\free}$ is $\interior(\Delta) \times N/P$, and the map $\mu_{\free}:M_{\free} \rightarrow \interior(\Delta) \times N/P$ defines a principal $T$-bundle. The \textbf{fifth invariant} in the Duistermaat-Pelayo classification is the Chern class of this bundle. This Chern class is an element of $H^2(\interior(\Delta) \times N/P, \mathfrak{t})$, and by \cite[Proposition 5.5 and Remarks 5.7--5.8]{duistermaat_pelayo} can be identified with an antisymmetric bilinear map $c:N \times N \rightarrow \ker(\sigma) \cap T_{\mathbb{Z}}$ satisfying
\begin{equation}\label{equation_fifth_inv}
    \xi(c(\xi', \xi'')) + \xi'(c(\xi'', \xi)) + \xi''(c(\xi, \xi')) = 0,
\end{equation}
for every $\xi, \xi', \xi'' \in P$. Here, the integral lattice $T_{\mathbb{Z}} \subset \mathfrak{t}$ is the kernel of the exponential map.

From the holonomy of a particular kind of a connection (see \cite[Definition 5.3]{duistermaat_pelayo}) on the principal bundle $M_{\free} \rightarrow \interior(\Delta) \times N/P$, Duistermaat and Pelayo obtain an element $\tau$ of the space of mappings
\begin{equation*}
    \Hom_c(P, T) := \left\{\tau:P \rightarrow T|\ \tau(\xi)\tau(\xi') = \tau(\xi + \xi')e^{c(\xi, \xi')/2} \text{ for all }\xi,\xi' \in P\right\},
\end{equation*}
which is unique up to an action of a certain group $\exp(\mathcal{A})$ on $\Hom_c(P, T)$ (see \cite[Definition 7.10]{duistermaat_pelayo}). The \textbf{sixth invariant} in the Duistermaat-Pelayo classification is the class $\bar \tau$ of $\tau$ in $\Hom_c(P, T) / \exp(\mathcal{A})$. For the complete details on the connections, the holonomy invariant, and the definition of $\exp(\mathcal{A})$, see Sections 5 and 7 in \cite{duistermaat_pelayo}.

Let $T$ be a torus. Following \cite[Definition 9.1]{duistermaat_pelayo}, a \textbf{list of ingredients} $(\sigma, T_h, \Delta, P, c, \bar \tau)$ for the torus $T$ consists of a bilinear form $\sigma$ on $\mathfrak{t}$, a subtorus $T_h$ with $\mathfrak{t}_h \subset \ker(\sigma)$, a Delzant polytope $\Delta \subset \mathfrak{t}_h^*$ centered at the origin, a cocompact subgroup $P$ of $N:=(\ker(\sigma)/\mathfrak{t}_h)^*$, an antisymmetric bilinear map $c:N \times N \rightarrow \ker(\sigma) \cap T_{\mathbb{Z}}$ satisfying \eqref{equation_fifth_inv}, and an equivalence class $\bar \tau$ in $\Hom_c(P, T) / \exp(\mathcal{A})$. This definition will be useful for statement of Theorem \ref{almost_existence_theorem} (c.f. \cite[Theorem 9.6]{duistermaat_pelayo}).
\\

Now, we are ready to state the classification of almost isotropy-maximal symplectic torus actions. By Proposition \ref{maximal_is_coisotropic_proposition} and \cite[Theorems 9.4]{duistermaat_pelayo}, we immediately deduce that two almost isotropy-maximal symplectic torus actions are equivariantly symplectomorphic if and only if their six invariants agree.
\begin{theorem}\label{almost_uniqueness_theorem}
    Let $(M, \omega)$ and $(M', \omega')$ be symplectic manifolds, equipped with almost isotropy-maximal symplectic $T$ actions. Then they are equivariantly symplectomorphic if and only if their six invariants in the Duistermaat-Pelayo classification \cite[Definition 9.3]{duistermaat_pelayo} agree.
\end{theorem}
\begin{remark}\label{fifth_invariant_remark}
    The fifth invariant $c:N \times N \rightarrow \ker(\sigma)$ is always trivial for almost isotropy-maximal actions, as can be seen by the proofs of Theorem \ref{maximal_uniqueness_theorem} and Theorem \ref{almost_existence_theorem}. Thus, only the other five invariants are needed to determine whether two actions are isomorphic. Note that the other five invariants are not trivial for almost isotropy-maximal actions.
\end{remark}
To complete the classification, we describe a simple construction for strictly almost isotropy-maximal symplectic torus actions as products of symplectic toric manifolds and tori. Together with Theorem \ref{maximal_existence_theorem}, this gives a simple construction for all almost isotropy-maximal symplectic torus actions.
\begin{theorem}\label{almost_existence_theorem}
    Let $(\sigma, T_h, \Delta, P, c, \bar\tau)$ be a list of ingredients as in Duistermaat and Pelayo’s classification \cite[Definition 9.1]{duistermaat_pelayo} (reviewed above), which satisfies $\dim \mathfrak{t}_h = \dim\ker(\sigma) - 1$. Moreover, let $(M_h, \omega_h, T_h)$ be a symplectic toric manifold with momentum image $\Delta$.
    
    Then there exists a torus $\mathbb{T}$ with a $\mathbb{T}$-invariant symplectic form $\omega_{\mathbb{T}}$, a codimension one subtorus $K \subset \mathbb{T}$ acting on $\mathbb{T}$ by the natural inclusion, and an isomorphism $T \cong T_h \times K$, such that the action of $T \cong T_h \times K$ on the product $(M_h \times \mathbb{T}, \omega_h + \omega_{\mathbb{T}})$ is a strictly almost isotropy-maximal symplectic action, and its values of the Duistermaat-Pelayo invariants are $\sigma, T_h, \Delta, P, c,$ and $\bar\tau$.
\end{theorem}
See Section \ref{proof_of_almost_existence_theorem_section} for the proof of Theorem \ref{almost_existence_theorem}. Together, Theorems \ref{almost_uniqueness_theorem} and \ref{almost_existence_theorem} give a full classification of almost isotropy-maximal symplectic torus actions up to equivariant symplectomorphism. It immediately follows from Theorems \ref{almost_uniqueness_theorem} and \ref{almost_existence_theorem} that every strictly almost isotropy-maximal action is equivariantly symplectomorphic to a product of a symplectic toric manifold and a torus. This gives a positive answer to Ishida's question (Problem 12.3 in \cite{ishida}) in the more general setting of almost isotropy-maximal actions.
\begin{remark}
    The result that every almost isotropy-maximal action is equivariantly symplectomorphic to a product of a symplectic toric manifold and a torus is tight in the following sense. There exist non-trivial symplectic torus actions satisfying the equality
    \begin{equation*}
        \dim T + \dim T_x = \dim M - 2
    \end{equation*}
    at some point $x \in M$, that are not diffeomorphic to a product of a symplectic toric manifold and a torus. In particular, these actions cannot be extended to higher dimensional symplectic torus actions (because then they would be almost isotropy-maximal, and thus by Theorems \ref{almost_uniqueness_theorem}, \ref{almost_existence_theorem}, or by Theorems \ref{maximal_uniqueness_theorem}, \ref{maximal_existence_theorem}, they would be diffeomorphic to such a product). See Section \ref{examples_section} for several such examples.
\end{remark}
\begin{remark}
    The following is a short discussion of how the Duistermaat-Pelayo invariants can be expressed in terms of the decomposition $(M_h \times \mathbb{T}, \omega_h + \omega_{\mathbb{T}})$ given in Theorem \ref{almost_existence_theorem}.
    First, $T_h$ is explicitly given as the torus acting on $M_h$, and $\Delta$ is the momentum image of this action, centered at the origin. Next, by Remark \ref{fifth_invariant_remark}, the invariant $c$ is trivial. The other three invariants $\sigma, P,$ and $\bar \tau$ are determined by the two-form $\omega_{\mathbb{T}}$, by the choice of isomorphism $T \cong T_h \times K$, and by the embedding of $K$ into $\mathbb{T}$. The inclusion maps of $K$ into $T$ and $\mathbb{T}$ fit in the following commuting diagram.
    \[
    \begin{tikzcd}[column sep=large, row sep=large]
    \liealg(\mathbb{T}) \arrow[d, "\exp"'] & \liealg(K) \arrow[l, hook', "j"'] \arrow[r, hook, "i"] \arrow[d, "\exp"] & \mathfrak{t} \arrow[d, "\exp"] \\
    \mathbb{T} & K \arrow[l, hook', "\tilde{j}"'] \arrow[r, hook, "\tilde{i}"] & T
    \end{tikzcd}
    \]

    $\sigma$ is the unique two-form on $\mathfrak{t}$ with $\mathfrak{t}_h \subset \ker(\sigma)$, such that its pullback $i^* \sigma$ to $\liealg(K)$ agrees with the pullback $(\exp \circ j)^* \omega_{\mathbb{T}}$ of $\omega_{\mathbb{T}}$ to $\liealg(K)$. Denote this pullback by $\omega_{\mathbb{T}}|_{\liealg(K)}:= (\exp \circ j)^* \omega_{\mathbb{T}} \in \Omega^2(\liealg(K), \mathbb{R})$.
    
    The invariant $P$ is by definition the image of the map $\evmap:H_1(M, \mathbb{Z}) \rightarrow (\ker(\sigma)/\mathfrak{t}_h)^*$ defined by $\evmap(A)(\xi) = \int_A\iota_{X_{\xi}} \omega$ where $X_{\xi} \in \mathfrak{X}(M)$ is the vector field corresponding to $\xi \in \ker(\sigma)$. We can use the decomposition $(M, \omega) \cong(M_h \times \mathbb{T}, \omega_h + \omega_{\mathbb{T}})$ to identify the vector space $N : = (\ker(\sigma)/\mathfrak{t}_h)^*$ with $\ker(\omega_{\mathbb{T}}|_{\liealg(K)})$ in $\liealg(K)$. It follows that $P := \image(\evmap)$ is equal to the image of the map $\evmap':H_1(\mathbb{T}, \mathbb{Z}) \rightarrow \ker(\omega_{\mathbb{T}}|_{\liealg(K)})^*$ defined by $\evmap'(A)(v) = \int_A\iota_{X_v} \omega_{\mathbb{T}}$ where $X_v \in \mathfrak{X}(\mathbb{T})$ is the vector field corresponding to $v \in \ker(\omega_{\mathbb{T}}|_{\liealg(K)})$. By choosing a basis for $\mathbb{T}$, and a corresponding basis for $H_1(\mathbb{T}, \mathbb{Z})$, we can write $A, \omega_{\mathbb{T}}$, and $v$ in coordinates, and then we have $\evmap'(A)(v) = A^T \omega_{\mathbb{T}} v$, which gives an explicit description for $P$ in coordinates.
    
    In a similar fashion, one can describe the last invariant $\bar \tau$ explicitly, but it is rather technical.
    
\end{remark}

Lastly, from Theorems \ref{almost_uniqueness_theorem} and \ref{almost_existence_theorem}, we immediately deduce that a strictly almost isotropy-maximal symplectic torus action always extends to an higher dimensional symplectic torus action. Note that an extension of a strictly almost isotropy-maximal action must be isotropy-maximal.
\begin{corollary}\label{almost_isotropy_maximal_lifting_corollary}
    Let $(M, \omega)$ be a symplectic manifold, equipped with a strictly almost isotropy-maximal symplectic torus action. Then the action extends to an isotropy-maximal symplectic torus action.
\end{corollary}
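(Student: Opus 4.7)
The plan is to read off the extension directly from the product decomposition provided by Corollary \ref{problem_corollary}. First I would apply Corollary \ref{problem_corollary} to the strictly almost isotropy-maximal symplectic $T$-action on $(M,\omega)$. This yields a symplectic toric manifold $(M_h, \omega_h, T_h)$ with Delzant polytope $\Delta$, a torus $\mathbb{T}$ with an invariant symplectic form $\omega_{\mathbb{T}}$, and a subtorus $K \subset \mathbb{T}$ with $\dim K = \dim \mathbb{T} - 1$ (the strict case of the corollary), together with an isomorphism $T \cong T_h \times K$ and an equivariant symplectomorphism between $(M,\omega)$ and $(M_h \times \mathbb{T}, \omega_h + \omega_{\mathbb{T}})$, where $T \cong T_h \times K$ acts by the toric action on the first factor and via the inclusion $K \hookrightarrow \mathbb{T}$ and translation on the second factor.

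Next, I would define the extended action. The natural candidate is the action of the larger torus $\widetilde{T} := T_h \times \mathbb{T}$ on $M_h \times \mathbb{T}$, where $T_h$ acts on $M_h$ by the toric action and $\mathbb{T}$ acts on itself by the full translation action (so the subtorus $T = T_h \times K$ recovers the original action). This action preserves $\omega_h + \omega_{\mathbb{T}}$ because $\omega_{\mathbb{T}}$ is translation-invariant, and it clearly contains the original $T$-action as a subgroup action.

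Finally, I would verify that the extended $\widetilde{T}$-action is isotropy-maximal. By Delzant's theorem (or the Atiyah--Guillemin--Sternberg convexity theorem as used in Example \ref{maximal_example_delzant}), the toric manifold $M_h$ has a fixed point $x_0 \in M_h$ for the $T_h$-action. Pick any $t_0 \in \mathbb{T}$ and consider the point $(x_0, t_0) \in M_h \times \mathbb{T}$. Its stabilizer under $\widetilde{T} = T_h \times \mathbb{T}$ is $T_h \times \{e\}$, which has dimension $\dim T_h$. Hence
\begin{equation*}
    \dim \widetilde{T} + \dim \widetilde{T}_{(x_0,t_0)} = (\dim T_h + \dim \mathbb{T}) + \dim T_h = 2\dim T_h + \dim \mathbb{T} = \dim(M_h \times \mathbb{T}),
\end{equation*}
using $\dim M_h = 2 \dim T_h$. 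This confirms Equation \eqref{maximal_equation} for the extension, so $\widetilde{T}$ acts isotropy-maximally and symplectically, extending the original $T$-action. There is no real obstacle here once Corollary \ref{problem_corollary} is in hand; the only point worth double-checking is that the inclusion $T \hookrightarrow \widetilde{T}$ induced by $T \cong T_h \times K \hookrightarrow T_h \times \mathbb{T}$ intertwines the two actions on $M_h \times \mathbb{T}$, which is immediate from the construction.
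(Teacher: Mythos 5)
Your proposal is correct and follows exactly the route the paper intends: the paper derives Corollary \ref{almost_isotropy_maximal_lifting_corollary} "immediately" from the product decomposition of Corollary \ref{problem_corollary}, by enlarging $K$ to all of $\mathbb{T}$, and your verification that the extended $T_h \times \mathbb{T}$ action is symplectic and satisfies Equation \eqref{maximal_equation} at a point $(x_0, t_0)$ with $x_0$ a $T_h$-fixed point fills in precisely the details the paper leaves implicit.
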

This gives an analogue to a similar result in the context of simply-connected non-negatively curved Riemannian manifolds, see \cite[Theorem A]{dong_escher_searle}.

\section{Actions that cannot be extended}\label{examples_section}
In this section, we present examples of non-trivial symplectic torus actions, that cannot be extended to higher dimensional symplectic torus actions, but are not isotropy-maximal.  These are based on results from \cite{classification_non_ham_s1, tolman_non_kahler, pelayo_two_tori, karshon_periodic}. Moreover, we explain why isotropy-maximal torus actions cannot be extended to higher dimensional torus actions.
\begin{example}\label{non_ham_example}
    In the end of Section 2 of \cite{classification_non_ham_s1}, we gave an example of a symplectic non-Hamiltonian $S^1$ action on a four-dimensional manifold $(M, \omega)$, which does not extend to a symplectic $T^2$ action. Since $\dim M = 4$ and $\dim T = 1$, Equation \eqref{almost_maximal_equation} cannot be satisfied, so the action is not almost isotropy-maximal.
\end{example}
\begin{example}\label{example_kahler}
    Tolman \cite{tolman_non_kahler} constructed a Hamiltonian $T^2$ action on a six-dimensional symplectic manifold which does not admit an equivariant Kähler structure. Since her manifold is simply-connected, every symplectic torus action on it is Hamiltonian. However, symplectic toric manifolds always admit equivariant Kähler structures, and therefore her action cannot be extended to a Hamiltonian $T^3$ action, or more generally to a symplectic $T^3$ action. On the other hand, since $\dim T^2 < \dim M/2$, Equation \eqref{almost_maximal_equation} cannot be satisfied, so the action is not almost isotropy-maximal.    
\end{example}
\begin{example}\label{example_s2_t2}
    Let $S^1$ act on $(S^2 \times \Sigma_g, dh \wedge d\theta + \omega_{\Sigma})$ by rotating the sphere at speed $1$, where $\Sigma_g$ is some surface of genus $g > 1$, and $\omega_{\Sigma}$ is any symplectic form on $\Sigma_g$. This is a Hamiltonian action, generated by the height function $H = h$ on the sphere. As before, the action is not almost isotropy-maximal. By \cite[Theorem 1.1 or Theorem 5.4]{pelayo_two_tori}, the only four-manifolds that support a symplectic $T^2$ action, with a Hamiltonian subcircle action, are $S^2 \times \mathbb{T}^2$ and symplectic toric manifolds. Since symplectic toric manifolds are simply-connected, $S^2 \times \Sigma_g$ cannot be diffeomorphic to a symplectic toric manifold. Since $S^2 \times \Sigma_g$ is also not diffeomorphic to $S^2 \times \mathbb{T}^2$, it follows that the Hamiltonian action described above does not extend to a symplectic $T^2$ action.
\end{example}
\begin{example}\label{example_s2_s2_blown_up}
    Let $S^1$ act on $(S^2 \times S^2, dh_1 \wedge d\theta_1 + dh_2 \wedge d\theta_2)$ by rotating the left sphere at speed $1$. This is a Hamiltonian action, generated by the height function $H = h_1$ on the left sphere. Performing three equivariant blow ups of the same size, at points in the bottom fixed sphere, we obtain a Hamiltonian $S^1$ action on a simply-connected $4$-manifold. This action does not extend to a Hamiltonian $T^2$ action by \cite[Proposition 5.22]{karshon_periodic} since its corresponding Karshon graph has three fixed points with the same momentum value (see the related \cite[Figure 16]{karshon_periodic} and \cite[Example 7.4]{karshon_periodic}). Because the manifold is simply-connected, every symplectic torus action is Hamiltonian, and thus the action does not extend to any symplectic $T^2$ action. As before, the action is not almost isotropy-maximal.
\end{example}
As mentioned in the introduction, a torus action satisfies
\begin{equation}\label{inequality_isotropy}
    \dim T_x \le \dim M - \dim T
\end{equation}
at every point $x \in M$, and it follows that an isotropy-maximal action cannot be extended to a higher dimensional torus action. In the following remark, we briefly explain why Equation \eqref{inequality_isotropy} is always satisfied.
\begin{remark}\label{isotropy_maximal_inequality_remark}
    Let $T$ act on a manifold $M$, and let $x \in M$ be a point. By the slice theorem, a neighborhood of the orbit $T \cdot x$ is equivariantly diffeomorphic to the associated bundle $T \times_{T_x} \mathbb{R}^k$, where 
    \begin{equation}\label{some_dim_equation}
        k = \dim M - \dim T + \dim T_x.
    \end{equation}
    By the assumption that the action is effective, \cite[Corollary B.44]{ggk_book} implies that the linear representation of $T_x$ on $\mathbb{R}^k$ is effective, and therefore the dimension of $T_x$ is bounded from above by $k/2$. Then, Equation \eqref{inequality_isotropy} follows from this bound together with Equation \eqref{some_dim_equation}.
\end{remark}

\section{Almost isotropy-maximal actions have coisotropic orbits}\label{maximal_is_coisotropic_proof_section}
In this section, we prove Proposition \ref{maximal_is_coisotropic_proposition}, which states that almost isotropy-maximal symplectic torus actions have coisotropic principal orbits.
\begin{proof}[Proof of Proposition \ref{maximal_is_coisotropic_proposition}]
    By assumption, the action is almost isotropy-maximal, hence there exists some point $x \in M$ that satisfies
    \begin{equation}\label{almost_maximal_bound_equation}
        \dim M \le\dim T + \dim T_x + 1,
    \end{equation}
    where $T_x$ is the stabilizer of $x$.
    
    We denote the rank of $\sigma$ by $d$. By the definition of $\sigma$, the restriction of $\omega$ to the tangent space $T_x (T \cdot x)$ of the orbit $T \cdot x$ at the point $x$ has rank $d$. Therefore, $d$ is bounded by the dimension of the orbit $T \cdot x$, and we have
    \begin{equation}\label{bound_on_dim_t}
        d \le \dim (T \cdot x) = \dim T - \dim T_x.
    \end{equation}
    
    Assume by contradiction that some principal orbit $\mathcal{O}$, passing through some point $y \in M$, is not coisotropic. Since the orbit is principal, its dimension is $\dim T$. Again, by the definition of $\sigma$, the rank of $\omega|_{T_y\mathcal{O}}$ is $d$. By the assumption that $\mathcal{O}$ is not coisotropic, we have
    \begin{equation*}
        \dim T = \dim(\mathcal{O}) \le \frac{\dim M + d}{2} - 1,
    \end{equation*}
    which together with Equation \eqref{almost_maximal_bound_equation} gives
    \begin{equation*}
        \dim T \le \dim T_x + d - 1.
    \end{equation*}
    Combining it with Equation \eqref{bound_on_dim_t}, we deduce that
    \begin{equation*}
        \dim T_x + d \le \dim T_x + d - 1,
    \end{equation*}
    and this is a contradiction. It follows that the principal orbits of the action are coisotropic.
\end{proof}

\section{Coisotropic actions that are almost isotropy-maximal}\label{lemma_proof_section}
In this section, we prove Lemma \ref{which_coisotropic_are_maximal_lemma}, which gives simple conditions on when a symplectic torus action with coisotropic principal orbits is almost isotropy-maximal or isotropy-maximal.
\begin{proof}[Proof of Lemma \ref{which_coisotropic_are_maximal_lemma}]
    By \cite[Lemma 2.3]{duistermaat_pelayo}, we have the following equality
    \begin{equation*}
        \dim M = \dim T + \dim \ker(\sigma).
    \end{equation*}
    Substituting in Equation \eqref{maximal_equation}, we see that the action is almost isotropy-maximal if and only if there exists a point $x \in M$ such that
    \begin{equation}\label{first_equality}
        \dim T_x \ge \dim\ker(\sigma) - 1,
    \end{equation}
    and it is isotropy-maximal if and only if there exists a point $x \in M$ such that
    \begin{equation}\label{first_and_a_half_equality}
        \dim T_x = \dim \ker(\sigma).
    \end{equation}
    By \cite[Lemma 2.2]{duistermaat_pelayo}, for every point $x \in M$, the Lie algebra $\mathfrak{t}_x$ of the stabilizer $T_x$ is contained in $\ker(\sigma)$, so we have
    \begin{equation}\label{second_equality}
        \dim T_x \le \dim \ker(\sigma).
    \end{equation}
    Let $T_h$ be the unique maximal subtorus of $T$ which acts in a Hamiltonian fashion (see \cite[Lemma 3.6]{duistermaat_pelayo}). Since the action has coisotropic principal orbits, we can choose a complementary subtorus $T_f$ which acts freely (see \cite[Section 5.2]{duistermaat_pelayo}). 
    Because $T_f$ acts freely, and since it is complementary to $T_h$ in $T$, we have
    \begin{equation}\label{third_equality}
        \dim T_x \le \dim T - \dim T_f = \dim T_h,
    \end{equation}
    for every point $x \in M$. On the other hand, by the Atiyah-Guillemin-Sternberg's convexity theorem (\cite{atiyah_convexity}, \cite{gs_convexity}), there exists some point $y \in M$ which is fixed by the action of $T_h$, so we have
    \begin{equation}\label{fourth_equality}
        \dim T_y \ge \dim T_h.
    \end{equation}
    It follows by Equations \eqref{third_equality} and \eqref{fourth_equality} that $\dim T_y = \dim T_h$, and that this is the maximal dimension of a stabilizer of the action. Thus, using also Equation \eqref{first_equality}, the action is almost isotropy-maximal if and only if we have
    \begin{equation*}
        \dim \mathfrak{t}_h = \dim T_h \ge \dim \ker(\sigma) - 1,
    \end{equation*}
    where $\mathfrak{t}_h$ is the Lie algebra of $T_h$. Furthermore, by Equation \eqref{first_and_a_half_equality} and Equation \eqref{second_equality}, it is isotropy-maximal if and only if we have
    \begin{equation*}
        \dim \mathfrak{t}_h = \dim T_h = \dim \ker(\sigma),
    \end{equation*}
    if and only if $\mathfrak{t}_h = \ker(\sigma)$.
\end{proof}

\section{Classification of isotropy-maximal actions}\label{proofs_of_class_section}
In this section, we prove Theorems \ref{maximal_uniqueness_theorem} and \ref{maximal_existence_theorem}, giving a simple description of the classification of isotropy-maximal symplectic torus actions.

\begin{proof}[Proof of Theorem \ref{maximal_uniqueness_theorem}]
    Let $(M, \omega)$ a symplectic manifold, equipped with an isotropy-maximal symplectic torus action. By Lemma \ref{which_coisotropic_are_maximal_lemma}, we have that $\mathfrak{t}_h = \ker(\sigma)$. Therefore, the second invariant $T_h$ can be read from the first invariant $\sigma$. Moreover, it follows that the group $N :=(\ker(\sigma)/\mathfrak{t}_h)^*$ is trivial, and therefore the fourth invariant $P$ and the fifth invariant $c$ are also trivial by definition (cf. Remark \ref{fifth_invariant_remark}). Since $P$ is trivial, then the sixth invariant $\bar \tau$ is also trivial by definition.
    
    Now, let $(M', \omega')$ be another symplectic manifold, equipped with an isotropy-maximal symplectic torus action. By Theorem \ref{almost_uniqueness_theorem}, the two spaces are equivariantly symplectomorphic if and only if the six invariants agree. By the above discussion, it follows that $(M, \omega)$ and $(M', \omega')$ are equivariantly symplectomorphic if and only if their invariants $\sigma$ and $\Delta$ agree.
\end{proof}
\begin{proof}[Proof of Theorem \ref{maximal_existence_theorem}]
    Let $T \cong T_h \times T_f, \sigma, \Delta \subset \mathfrak{t}_h^*,$ and $(M_h, \omega_h, T_h)$ be as in the statement of the theorem. 
    
    As mentioned in the statement of the theorem, $\sigma$ induces a unique invariant symplectic form $\omega_f$ on $T_f$. More explicitly, $\omega_f$ is the unique invariant two-form on $T_f$ that satisfies
    \begin{equation*}
        \exp^*\omega_f = j^* \sigma, 
    \end{equation*}
    where $\exp:\mathfrak{t}_f \rightarrow T_f$ is the exponential map, and $j:\mathfrak{t}_f \rightarrow \mathfrak{t}$ is the embedding given by the isomorphism $T \cong T_h \times T_f$. Because $T_f$ is complementary to $T_h$, and since $\ker(\sigma) = \mathfrak{t}_h$, we have $\dim(T_f) = \rank(\sigma) = \rank((\omega_f)_y)$ at any point $y \in T_f$. Therefore, $\omega_f$ is non-degenerate and thus symplectic.
    
    It follows that the free action of $T_f$ on itself is an isotropy-maximal symplectic torus action (see also Example \ref{maximal_example_torus}). As mentioned in Example \ref{maximal_example_delzant}, the action of $T_h$ on $M_h$ is also an isotropy-maximal symplectic torus action. Thus, as in Example \ref{product_example}, the action of $T \cong T_h \times T_f$ on the product space $(M_h \times T_f, \omega_h + \omega_f)$, given by
    \begin{equation*}
        (t, t') \cdot (x,y) = (t\cdot x, t'\cdot y)
    \end{equation*}
    is an isotropy-maximal symplectic torus action.

    Since $T_h$ is the maximal subtorus that acts in a Hamiltonian fashion, the Delzant polytope of the product space is the momentum image of the $T_h$ action, which is $\Delta$ by assumption. Finally, by the construction of the symplectic form $\omega_f$, the unique antisymmetric bilinear form on $\mathfrak{t}$ defined by Equation \eqref{bilinear_form_equation} is equal to $\sigma$.
\end{proof}

\section{Proof of Theorem \ref{almost_existence_theorem}}\label{proof_of_almost_existence_theorem_section}
In this section, we prove Theorem \ref{almost_existence_theorem}. One approach for proving Theorem \ref{almost_existence_theorem}, is to follow Duistermaat and Pelayo's construction of actions with coisotropic orbits (see \cite[Section 9.2]{duistermaat_pelayo}), and show that if $\dim \mathfrak{t}_h = \dim \ker(\sigma) - 1$, then the space that they construct can indeed be described as a product of a symplectic toric manifold and a torus. However, their construction is very general, so we decided to prove the theorem directly from \cite[Lemma 7.5]{duistermaat_pelayo} and Lemma \ref{linearizing_torus_action_lemma} below, for clarity.
\begin{proof}[Proof of Theorem \ref{almost_existence_theorem}]
    Let $(\sigma, T_h, \Delta, P, c, \bar\tau)$ be a list of ingredients which satisfies $\dim \mathfrak{t}_h = \dim\ker(\sigma) - 1$, and let $(M_h, \omega_h, T_h)$ be a symplectic toric manifold with momentum image $\Delta$.
    By \cite[Theorem 9.6]{duistermaat_pelayo}, there exists a symplectic manifold $(M, \omega)$ with a symplectic $T$ action, whose principal orbits are coisotropic, and whose Duistermaat-Pelayo invariants are equal to $\sigma, T_h, \Delta, P, c,$ and $\bar\tau$.
    
    By Lemma \ref{which_coisotropic_are_maximal_lemma}, because $\dim \mathfrak{t}_h = \dim \ker(\sigma) - 1$, the $T$ action on $M$ is strictly almost isotropy-maximal. We wish to show that it can be described as a product space as in the statement.
    
    By definition, the subspace $N := (\ker(\sigma)/\mathfrak{t}_h)^*$ is one-dimensional. As mentioned in Section \ref{results_section}, the fifth invariant can be identified with an antisymmetric map $c:N \times N \rightarrow \ker(\sigma)$, and therefore it must be the zero map since $\dim N = 1$ (cf. Remark \ref{fifth_invariant_remark}). Another argument is that the Chern class of the principal $T$-bundle $\mu_{\free} :M_{\free} \rightarrow \interior(\Delta) \times N/P$ is an element of $H^2(\interior(\Delta) \times N/P, \mathfrak{t})$, and because $\interior(\Delta) \times N/P$ is homotopy-equivalent to $S^1$, its second cohomology vanishes and therefore the Chern class vanishes.
    
    Let $T_h$ be the unique maximal subtorus of $T$ acting in a Hamiltonian fashion, and let $T_f$ be a complementary subtorus to $T_h$ in $T$. Since $c$ is trivial, it follows by \cite[Lemma 7.5]{duistermaat_pelayo} that $(M, \omega)$ is equivariantly symplectomorphic to a product of a symplectic toric manifold $(M_h, \omega_h, T_h)$ and a symplectic manifold $(M_f, \omega_f, T_f)$ equipped with a free $T_f$ action.
    
    Since the action is strictly almost isotropy-maximal, we have a point $x \in M$ which satisfies
    \begin{equation}
        \dim T + \dim T_x = \dim M - 1.
    \end{equation}
    Because $T_f$ acts freely, and $T_h$ is a Hamiltonian action, the left-hand side of the equation is equal to $2\dim T_h + \dim T_f$ at fixed points of $T_h$, and this is the maximum value possible over all points in $M$. Moreover, because $2 \dim T_h = \dim M_h$ and $\dim M_h + \dim M_f = \dim M$, we deduce that $\dim T_f = \dim M_f - 1$.

    We are left with showing that there exists a torus $\mathbb{T}$ with a $\mathbb{T}$-invariant symplectic form $\omega_{\mathbb{T}}$, and a codimension one subtorus $K \subset \mathbb{T}$, such that the action of $T_f$ on $(M_f, \omega_f)$ is equivariantly symplectomorphic to the natural action of $K$ on $(\mathbb{T}, \omega_{\mathbb{T}})$. This follows from Lemma \ref{linearizing_torus_action_lemma} below.
\end{proof}
\begin{lemma}\label{linearizing_torus_action_lemma}
    Let a torus $T^{2n-1}$ act freely and symplectically on a symplectic manifold $(M^{2n}, \omega)$. Then there exists a torus $\mathbb{T}^{2n}$, a subtorus $K^{2n-1} \subset \mathbb{T}^{2n}$, and a $\mathbb{T}^{2n}$-invariant symplectic form $\omega_{\mathbb{T}}$ on $\mathbb{T}^{2n}$, such that the action of $T$ on $(M, \omega)$ is equivariantly symplectomorphic to the natural action of $K^{2n-1}$ on $(\mathbb{T}^{2n}, \omega_{\mathbb{T}})$.
\end{lemma}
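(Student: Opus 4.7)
The plan is to first identify $M$ with $T^{2n}$ as a $T^{2n-1}$-manifold, and then bring the symplectic form into a $\mathbb{T}^{2n}$-invariant normal form via a Moser deformation. Since $T = T^{2n-1}$ acts freely and $M$ is closed and connected, the quotient $B = M/T$ is a compact connected $1$-manifold, hence $B \cong S^1$, and the orbit projection $M \to S^1$ is a principal $T^{2n-1}$-bundle. Such bundles are classified by $\pi_0(T^{2n-1}) = 0$ and are therefore trivial, so there is a $T$-equivariant diffeomorphism $M \cong S^1 \times T^{2n-1} =: \mathbb{T}^{2n}$, under which $T$ becomes the codimension-one subtorus $K := \{0\} \times T^{2n-1}$.

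The next step is to analyze $\omega$, transported to $\mathbb{T}^{2n}$, in coordinates $(s, t_1, \dots, t_{2n-1})$. The assumption of $K$-invariance forces the coefficients to depend only on $s$, and closedness of $\omega$ then forces the coefficients of the $dt_i \wedge dt_j$ terms to be constants $g_{ij}$, yielding
\begin{equation*}
\omega = \sum_i f_i(s)\, ds \wedge dt_i + \sum_{i<j} g_{ij}\, dt_i \wedge dt_j.
\end{equation*}
Averaging over $s$ produces the $\mathbb{T}^{2n}$-invariant candidate
\begin{equation*}
\omega_0 = \sum_i \bar f_i\, ds \wedge dt_i + \sum_{i<j} g_{ij}\, dt_i \wedge dt_j, \qquad \bar f_i = \int_{S^1} f_i(s)\, ds.
\end{equation*}
To check $\omega_0$ is non-degenerate I would pick a nonzero $v_0 = \sum v_0^i \partial_{t_i} \in \ker(\sigma)$ (such a $v_0$ exists because $\sigma$ is antisymmetric on the odd-dimensional space $\mathfrak{t}$): then non-degeneracy of $\omega$ is equivalent to $h(s) := \sum_i f_i(s)\, v_0^i$ being nowhere zero on $S^1$, whence $h$ has constant sign and its average $\bar h \ne 0$ controls the analogous computation for $\omega_0$.

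For the Moser step, observe that $\omega - \omega_0 = d\alpha$ with $\alpha := \sum_i F_i(s)\, dt_i$, where $F_i$ is any primitive of $f_i - \bar f_i$ on $S^1$ (which exists because $f_i - \bar f_i$ has mean zero), and $\alpha$ is manifestly $K$-invariant. Along the path $\omega_\tau := \omega - \tau\, d\alpha = (1-\tau)\omega + \tau\omega_0$, pairing with $v_0$ gives $(1-\tau)h(s) + \tau\bar h$, which remains of constant sign on $[0,1] \times S^1$, so $\omega_\tau$ is symplectic throughout. Moser's trick then produces a $K$-invariant time-dependent vector field $X_\tau$ solving $\iota_{X_\tau}\omega_\tau = \alpha$, whose time-$1$ flow $\phi$ is a $K$-equivariant diffeomorphism of $\mathbb{T}^{2n}$ with $\phi^*\omega_0 = \omega$; composing with the trivialization yields the desired equivariant symplectomorphism $(M, \omega) \to (\mathbb{T}^{2n}, \omega_0)$. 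The most delicate point is maintaining non-degeneracy of $\omega_\tau$ throughout the deformation, and this is where the one-dimensional kernel analysis does all the work.
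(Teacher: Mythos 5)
Your proposal is correct and follows essentially the same route as the paper: trivialize the principal $T^{2n-1}$-bundle over $S^1$ to identify $M$ with $\mathbb{T}^{2n}$, average the $s$-dependent coefficients to get the invariant form, and run a $K$-invariant Moser deformation whose non-degeneracy is controlled by pairing the (one-dimensional) kernel direction of $\omega$ restricted to the orbit directions against $\partial_s$, using that a nowhere-vanishing function on $S^1$ has the same sign as its average. The only cosmetic differences are that you trivialize the bundle via $\pi_0$ of the structure group rather than the vanishing of the Chern class in $H^2(S^1;\mathfrak{t})$, and you derive constancy of the $dt_i\wedge dt_j$ coefficients directly from closedness rather than via Stokes.
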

\begin{proof}
    First, since $\dim T = \dim M - 1$, then $M \rightarrow M/T$ defines a principal torus bundle over a circle. Let $\mathfrak{t}$ be the Lie algebra of $T$. Then the Chern class of $M \rightarrow M/T$ is a cohomology class in $H^2(S^1, \mathfrak{t}) = 0$, and therefore the bundle is trivial. It follows that $M$ is diffeomorphic to a torus $\mathbb{T}^{2n}$, and we can choose coordinates on $M \cong \mathbb{T}^{2n}$ and $T^{2n-1}$, such that the action is given by
    \begin{equation*}
        (t_1,...,t_{2n-1}) \cdot (x_1,...,x_{2n}) = (x_1 + t_1,...,x_{2n-1} + t_{2n-1}, x_{2n}),
    \end{equation*}
    where the coordinates $x_1,...,x_{2n}$ on the torus $\mathbb{T}^{2n}$ and the coordinates $t_1,...,t_{2n-1}$ on the acting torus $T^{2n-1}$ are all given as values in $\mathbb{R}/\mathbb{Z}$. In particular, from these coordinates we can identify $T^{2n-1}$ with a subgroup $K^{2n-1}$ of $\mathbb{T}^{2n}$.
    
    The symplectic form $\omega$ on $\mathbb{T}^{2n} \cong M$ can then be written as
    \begin{equation*}
        \omega(x) = \sum_{1 \le i < j \le 2n} f_{i,j}(x) dx_i \wedge dx_j.
    \end{equation*}
    By the $T^{2n-1}$-invariance of $\omega$, the functions $f_{i,j}$ only depend on the $x_{2n}$ coordinate, and we will write $f_{i,j}(x_{2n})$ from now on. For every point $y \in \mathbb{T}^{2n}$, and integers $1 \le i < j \le 2n$, let $\Sigma_{i,j}(y)$ be the 2-dimensional submanifold of $\mathbb{T}^{2n}$ given by
    \begin{equation*}
        \Sigma_{i,j}(y) := \{ (x_1,...,x_{2n}) \in \mathbb{T}^{2n}|\ \forall k \notin \{i,j\} : x_k = y_k \}.
    \end{equation*}
    Since $\omega$ is closed, its integral over the submanifold $\Sigma_{i,j}(y)$ is independent of the choice of $y$, by Stokes theorem. We define the constants $a_{i,j}$ by
    \begin{equation*}
        a_{i,j} = \int_{\Sigma_{i,j}(y)} \omega = \int_0^1\int_0^1 f_{i,j}(x_{2n}) dx_i dx_j,
    \end{equation*}
    and note that if $1 \le i < j \le 2n-1$, then $a_{i, j} = f_{i, j}(x_{2n})$ for every $x_{2n}$, so $f_{i, j}$ is a constant function.
    The cohomology class $[\omega]$ of the symplectic form can be written as
    \begin{equation*}
        [\omega] = \sum_{1 \le i < j \le 2n} a_{i,j} [dx_i \wedge dx_j].
    \end{equation*}
    We define a $\mathbb{T}^{2n}$-invariant symplectic form on $\mathbb{T}^{2n}$ by
    \begin{equation*}
        \omega_{\mathbb{T}}(x) = \sum_{1 \le i < j \le 2n} a_{i,j} dx_i \wedge dx_j,
    \end{equation*}
    and obviously we have $[\omega] = [\omega_{\mathbb{T}}]$. The difference between the symplectic forms $\omega_{\mathbb{T}}$ and $\omega$ is given by
    \begin{equation*}
        \omega_{\mathbb{T}} - \omega = \sum_{1 \le i \le 2n-1} (a_{i, 2n} - f_{i, 2n}(x_{2n})) dx_i \wedge dx_{2n}.
    \end{equation*}
    In particular, because $a_{i, 2n} = \int_0^1 f_{i, 2n} (x_{2n}) dx_{2n}$, then the functions
    \begin{equation*}
        b_i(s) := \int_0^{s} (f_{i, 2n} (x_{2n}) - a_{i, 2n}) dx_{2n}
    \end{equation*}
    are well defined for $s \in \mathbb{R}/\mathbb{Z}$. Moreover, they satisfy $\frac{\partial b_i}{\partial x_{2n}}(x_{2n}) = f_{i, 2n} (x_{2n}) - a_{i, 2n}$. Therefore, the one-form $\beta$ defined by
    \begin{equation*}
        \beta := \sum_{1 \le i \le 2n - 1} b_i(x_{2n}) dx_i
    \end{equation*}
    satisfies $d \beta = \omega_{\mathbb{T}} - \omega$. For every $t \in [0,1]$ we define $\omega_t = (1-t) \omega + t\omega_{\mathbb{T}}$. We will show below that $\omega_t$ is non-degenerate for every $t$. It then follows, by applying Moser's trick, that the flow $\Psi_t$ given by integrating the unique vector field $Z_t$ which solves the equation
    \begin{equation*}
        \iota_{Z_t} \omega_t = -d\beta,
    \end{equation*}
    satisfies $\Psi_1^*\omega_{\mathbb{T}} = \omega$, i.e., $\Psi_1$ is a symplectomorphism between $(\mathbb{T}^{2n}, \omega)$ and $(\mathbb{T}^{2n}, \omega_{\mathbb{T}})$. Furthermore, $\beta$ and $\omega_t$ are $T^{2n-1}$-invariant, and therefore the symplectomorphism $\Psi_1$ is $T^{2n-1}$-equivariant, and this finishes the proof.

    Therefore, we are left with showing that $\omega_t$ is indeed non-degenerate for every $t \in [0,1]$. For every $x \in \mathbb{T}^{2n}$, let $V_x$ be the $2n-1$ dimensional subspace of $T_x \mathbb{T}^{2n}$ generated by the tangent vectors $\frac{\partial}{\partial x_1},...,\frac{\partial}{\partial x_{2n-1}}$. We identify all of these subspaces with $\mathbb{R}^{2n-1}$ and call them all $V$. Restricting $\omega_x$ and $(\omega_{\mathbb{T}})_x$ to $V_x$, we get the same antisymmetric bilinear form $\omega_x|_V = (\omega_{\mathbb{T}})_x|_V$ on $V$, and it has rank $2n-2$. The restriction $\omega_x|_V$ is independent of $x \in M$, hence there exists a vector field
    \begin{equation*}
        u := \sum_{1 \le i \le 2n-1}u_i\frac{\partial}{\partial x_i},
    \end{equation*}
    which satisfies $\omega_y(u, v) = (\omega_{\mathbb{T}})_y(u, v) = 0$, for every $v \in V$ and $y \in M$.
    
    Since $\omega$ and $\omega_{\mathbb{T}}$ are non-degenerate, we have $\omega_y(u, \frac{\partial}{\partial x_{2n}}) \ne 0$ and $(\omega_{\mathbb{T}})_y(u, \frac{\partial}{\partial x_{2n}}) \ne 0$ at every point $y \in M$. It follows that $\omega_y(u, \frac{\partial}{\partial x_{2n}})$ does not change sign when we change the $x_{2n}$-coordinate of $y$. In coordinates, we have
    \begin{align*}
            \omega_y(u, \frac{\partial}{\partial x_{2n}}) = & \sum_{1 \le i \le 2n-1} u_i f_{i, 2n}(x_{2n}) \\
            (\omega_{\mathbb{T}})_y(u, \frac{\partial}{\partial x_{2n}}) = & \sum_{1 \le i \le 2n-1} u_i a_{i, 2n}
    \end{align*}
    and because $a_{i, 2n} = \int_0^1 f_{i, 2n} (x_{2n}) dx_{2n}$, then both $\omega_y(u, \frac{\partial}{\partial x_{2n}})$ and $(\omega_{\mathbb{T}})_y(u, \frac{\partial}{\partial x_{2n}})$ have the same sign. It follows that for every $t \in [0,1]$ we have
    \begin{equation*}
        ((1-t)\omega + t\omega_{\mathbb{T}})_y(u, \frac{\partial}{\partial x_{2n}}) \ne 0,
    \end{equation*}
    and therefore $(1-t)\omega + t\omega_{\mathbb{T}}$ is non-degenerate for every $t \in [0,1]$ and $y \in M$.
\end{proof}

\printbibliography

@incollection {duistermaat_pelayo,
    AUTHOR = {Duistermaat, Johannes Jisse and Pelayo, \'{A}lvaro},
     TITLE = {Symplectic torus actions with coisotropic principal orbits},
      NOTE = {Festival Yves Colin de Verdi\`ere},
   JOURNAL = {Ann. Inst. Fourier (Grenoble)},
  FJOURNAL = {Universit\'{e} de Grenoble. Annales de l'Institut Fourier},
    VOLUME = {57},
      YEAR = {2007},
    NUMBER = {7},
     PAGES = {2239--2327},
      ISSN = {0373-0956,1777-5310},
   MRCLASS = {53C20 (17B30 37J05)},
  MRNUMBER = {2394542},
MRREVIEWER = {Anna\ Gori},
       URL = {http://aif.cedram.org/item?id=AIF_2007__57_7_2239_0},
}

@article {ishida,
    AUTHOR = {Ishida, Hiroaki},
     TITLE = {Complex manifolds with maximal torus actions},
   JOURNAL = {J. Reine Angew. Math.},
  FJOURNAL = {Journal f\"ur die Reine und Angewandte Mathematik. [Crelle's
              Journal]},
    VOLUME = {751},
      YEAR = {2019},
     PAGES = {121--184},
      ISSN = {0075-4102,1435-5345},
   MRCLASS = {32M99 (14M25 32J18 32Q57)},
  MRNUMBER = {3956693},
MRREVIEWER = {G.\ K.\ Sankaran},
       DOI = {10.1515/crelle-2016-0023},
       URL = {https://doi.org/10.1515/crelle-2016-0023},
}

@article {benoist,
    AUTHOR = {Benoist, Yves},
     TITLE = {Actions symplectiques de groupes compacts},
   JOURNAL = {Geom. Dedicata},
  FJOURNAL = {Geometriae Dedicata},
    VOLUME = {89},
      YEAR = {2002},
     PAGES = {181--245},
      ISSN = {0046-5755,1572-9168},
   MRCLASS = {53D20 (57S15)},
  MRNUMBER = {1890958},
MRREVIEWER = {Tara\ S.\ Holm},
       DOI = {10.1023/A:1014253511289},
       URL = {https://doi.org/10.1023/A:1014253511289},
}

@article {ishida_karshon,
    AUTHOR = {Ishida, Hiroaki and Karshon, Yael},
     TITLE = {Completely integrable torus actions on complex manifolds with
              fixed points},
   JOURNAL = {Math. Res. Lett.},
  FJOURNAL = {Mathematical Research Letters},
    VOLUME = {19},
      YEAR = {2012},
    NUMBER = {6},
     PAGES = {1283--1295},
      ISSN = {1073-2780,1945-001X},
   MRCLASS = {14M25 (32M05 32M12 52B12 57S25)},
  MRNUMBER = {3091608},
MRREVIEWER = {Ivan\ Arzhantsev},
       DOI = {10.4310/MRL.2012.v19.n6.a9},
       URL = {https://doi.org/10.4310/MRL.2012.v19.n6.a9},
}

@article {giacobbe,
    AUTHOR = {Giacobbe, Andrea},
     TITLE = {Convexity of multi-valued momentum maps},
   JOURNAL = {Geom. Dedicata},
  FJOURNAL = {Geometriae Dedicata},
    VOLUME = {111},
      YEAR = {2005},
     PAGES = {1--22},
      ISSN = {0046-5755,1572-9168},
   MRCLASS = {53D20 (37J05 37J15)},
  MRNUMBER = {2155173},
       DOI = {10.1007/s10711-004-1620-y},
       URL = {https://doi.org/10.1007/s10711-004-1620-y},
}

@article {atiyah_convexity,
    AUTHOR = {Atiyah, M. F.},
     TITLE = {Convexity and commuting {H}amiltonians},
   JOURNAL = {Bull. London Math. Soc.},
  FJOURNAL = {The Bulletin of the London Mathematical Society},
    VOLUME = {14},
      YEAR = {1982},
    NUMBER = {1},
     PAGES = {1--15},
      ISSN = {0024-6093,1469-2120},
   MRCLASS = {53C15 (22E30 53C55 58F05)},
  MRNUMBER = {642416},
MRREVIEWER = {A.\ Morimoto},
       DOI = {10.1112/blms/14.1.1},
       URL = {https://doi.org/10.1112/blms/14.1.1},
}

@article {gs_convexity,
    AUTHOR = {Guillemin, V. and Sternberg, S.},
     TITLE = {Convexity properties of the moment mapping},
   JOURNAL = {Invent. Math.},
  FJOURNAL = {Inventiones Mathematicae},
    VOLUME = {67},
      YEAR = {1982},
    NUMBER = {3},
     PAGES = {491--513},
      ISSN = {0020-9910,1432-1297},
   MRCLASS = {58F05 (57S25 70H05)},
  MRNUMBER = {664117},
MRREVIEWER = {I.\ Vaisman},
       DOI = {10.1007/BF01398933},
       URL = {https://doi.org/10.1007/BF01398933},
}

@article {ginzburg_symplectic_actions,
    AUTHOR = {Ginzburg, Viktor L.},
     TITLE = {Some remarks on symplectic actions of compact groups},
   JOURNAL = {Math. Z.},
  FJOURNAL = {Mathematische Zeitschrift},
    VOLUME = {210},
      YEAR = {1992},
    NUMBER = {4},
     PAGES = {625--640},
      ISSN = {0025-5874,1432-1823},
   MRCLASS = {57S15 (57R91 57S25 58F05)},
  MRNUMBER = {1175727},
MRREVIEWER = {Nikolai\ K.\ Smolentsev},
       DOI = {10.1007/BF02571819},
       URL = {https://doi.org/10.1007/BF02571819},
}

@unpublished{extending_tall_complexity_one,
    author = {Liu, Yichen and Palmer, Joseph and Tolman, Susan},
    title = {Extending tall complexity one spaces to symplectic toric manifolds},
    note = {in preparation},
}

@unpublished{extending_short_complexity_one,
    author = {Liu, Yichen},
    title = {Toric extension of complexity one spaces},
    note = {in preparation}
}

@article {tolman_non_kahler,
    AUTHOR = {Tolman, Susan},
     TITLE = {Examples of non-{K}\"ahler {H}amiltonian torus actions},
   JOURNAL = {Invent. Math.},
  FJOURNAL = {Inventiones Mathematicae},
    VOLUME = {131},
      YEAR = {1998},
    NUMBER = {2},
     PAGES = {299--310},
      ISSN = {0020-9910,1432-1297},
   MRCLASS = {53D20},
  MRNUMBER = {1608575},
MRREVIEWER = {Stephanie\ F.\ Singer},
       DOI = {10.1007/s002220050205},
       URL = {https://doi.org/10.1007/s002220050205},
}

@article {karshon_periodic,
    AUTHOR = {Karshon, Yael},
     TITLE = {Periodic {H}amiltonian flows on four-dimensional manifolds},
   JOURNAL = {Mem. Amer. Math. Soc.},
  FJOURNAL = {Memoirs of the American Mathematical Society},
    VOLUME = {141},
      YEAR = {1999},
    NUMBER = {672},
     PAGES = {viii+71},
      ISSN = {0065-9266,1947-6221},
   MRCLASS = {53D35 (37J05 53C55 57S15)},
  MRNUMBER = {1612833},
MRREVIEWER = {Christopher\ T.\ Woodward},
       DOI = {10.1090/memo/0672},
       URL = {https://doi.org/10.1090/memo/0672},
}

@article {pelayo_two_tori,
    AUTHOR = {Pelayo, \'{A}lvaro},
     TITLE = {Symplectic actions of 2-tori on 4-manifolds},
   JOURNAL = {Mem. Amer. Math. Soc.},
  FJOURNAL = {Memoirs of the American Mathematical Society},
    VOLUME = {204},
      YEAR = {2010},
    NUMBER = {959},
     PAGES = {viii+81},
      ISSN = {0065-9266,1947-6221},
      ISBN = {978-0-8218-4713-8},
   MRCLASS = {53D35 (55R10 57M60 57R17)},
  MRNUMBER = {2640344},
MRREVIEWER = {Eduardo\ A.\ Gonzalez},
       DOI = {10.1090/S0065-9266-09-00584-5},
       URL = {https://doi.org/10.1090/S0065-9266-09-00584-5},
}

@article{classification_non_ham_s1,
  title={Classification of symplectic non-Hamiltonian circle actions on 4-manifolds},
  author={Henigman, Rei},
  journal={arXiv preprint arXiv:2411.10157},
  year={2024}
}

@article {delzant,
    AUTHOR = {Delzant, Thomas},
     TITLE = {Hamiltoniens p\'eriodiques et images convexes de l'application
              moment},
   JOURNAL = {Bull. Soc. Math. France},
  FJOURNAL = {Bulletin de la Soci\'et\'e{} Math\'ematique de France},
    VOLUME = {116},
      YEAR = {1988},
    NUMBER = {3},
     PAGES = {315--339},
      ISSN = {0037-9484},
   MRCLASS = {58F05},
  MRNUMBER = {984900},
MRREVIEWER = {J.\ J.\ Duistermaat},
       URL = {http://www.numdam.org/item?id=BSMF_1988__116_3_315_0},
}

@article {centered_hamiltonians,
    AUTHOR = {Karshon, Yael and Tolman, Susan},
     TITLE = {Centered complexity one {H}amiltonian torus actions},
   JOURNAL = {Trans. Amer. Math. Soc.},
  FJOURNAL = {Transactions of the American Mathematical Society},
    VOLUME = {353},
      YEAR = {2001},
    NUMBER = {12},
     PAGES = {4831--4861},
      ISSN = {0002-9947,1088-6850},
   MRCLASS = {53D20 (53D35)},
  MRNUMBER = {1852084},
MRREVIEWER = {Stephanie\ F.\ Singer},
       DOI = {10.1090/S0002-9947-01-02799-4},
       URL = {https://doi.org/10.1090/S0002-9947-01-02799-4},
}

@article {tall_uniqueness,
    AUTHOR = {Karshon, Yael and Tolman, Susan},
     TITLE = {Complete invariants for {H}amiltonian torus actions with two
              dimensional quotients},
   JOURNAL = {J. Symplectic Geom.},
  FJOURNAL = {The Journal of Symplectic Geometry},
    VOLUME = {2},
      YEAR = {2003},
    NUMBER = {1},
     PAGES = {25--82},
      ISSN = {1527-5256,1540-2347},
   MRCLASS = {53D20 (57R17 57S15)},
  MRNUMBER = {2128388},
MRREVIEWER = {Christopher\ T.\ Woodward},
       DOI = {10.4310/JSG.2004.v2.n1.a2},
       URL = {https://doi.org/10.4310/JSG.2004.v2.n1.a2},
}

@article {tall_existence,
    AUTHOR = {Karshon, Yael and Tolman, Susan},
     TITLE = {Classification of {H}amiltonian torus actions with
              two-dimensional quotients},
   JOURNAL = {Geom. Topol.},
  FJOURNAL = {Geometry \& Topology},
    VOLUME = {18},
      YEAR = {2014},
    NUMBER = {2},
     PAGES = {669--716},
      ISSN = {1465-3060,1364-0380},
   MRCLASS = {53D35 (14M25 52B20 53D20 57S15)},
  MRNUMBER = {3180483},
MRREVIEWER = {Yanying\ Wang},
       DOI = {10.2140/gt.2014.18.669},
       URL = {https://doi.org/10.2140/gt.2014.18.669},
}

@article {lerman_karshon,
    AUTHOR = {Karshon, Yael and Lerman, Eugene},
     TITLE = {Non-compact symplectic toric manifolds},
   JOURNAL = {SIGMA Symmetry Integrability Geom. Methods Appl.},
  FJOURNAL = {SIGMA. Symmetry, Integrability and Geometry. Methods and
              Applications},
    VOLUME = {11},
      YEAR = {2015},
     PAGES = {Paper 055, 37},
      ISSN = {1815-0659},
   MRCLASS = {53D20 (14M25 37J35)},
  MRNUMBER = {3371718},
MRREVIEWER = {Xiaobin\ Li},
       DOI = {10.3842/SIGMA.2015.055},
       URL = {https://doi.org/10.3842/SIGMA.2015.055},
}

@article{lerman_tolman,
 ISSN = {00029947},
 URL = {http://www.jstor.org/stable/2155581},
 abstract = {In the first part of the paper, we build a foundation for further work on Hamiltonian actions on symplectic orbifolds. Most importantly, we prove the orbifold versions of the abelian connectedness and convexity theorems. In the second half, we prove that compact symplectic orbifolds with completely integrable torus actions are classified by convex simple rational polytopes with a positive integer attached to each open facet and that all such orbifolds are algebraic toric varieties.},
 author = {Eugene Lerman and Susan Tolman},
 journal = {Transactions of the American Mathematical Society},
 number = {10},
 pages = {4201--4230},
 publisher = {American Mathematical Society},
 title = {Hamiltonian Torus Actions on Symplectic Orbifold and Toric Varieties},
 urldate = {2024-01-26},
 volume = {349},
 year = {1997}
}

@ARTICLE{orbifold_ham_s1,
       author = {{Godinho}, Leonor and {Mwakyoma-Oliveira}, Grace T. and {Sepe}, Daniele},
        title = "{Classification of Hamiltonian $S^1$-actions on compact symplectic orbifolds with isolated cyclic singular points in dimension four}",
      journal = {arXiv e-prints},
     keywords = {Mathematics - Symplectic Geometry, 53D20, 53D35},
         year = 2024,
        month = jan,
          eid = {arXiv:2401.15466},
        pages = {arXiv:2401.15466},
          doi = {10.48550/arXiv.2401.15466},
archivePrefix = {arXiv},
       eprint = {2401.15466},
 primaryClass = {math.SG},
       adsurl = {https://ui.adsabs.harvard.edu/abs/2024arXiv240115466G},
      adsnote = {Provided by the SAO/NASA Astrophysics Data System}
}

@article {escher_searle,
    AUTHOR = {Escher, Christine and Searle, Catherine},
     TITLE = {Torus actions, maximality, and non-negative curvature},
   JOURNAL = {J. Reine Angew. Math.},
  FJOURNAL = {Journal f\"ur die Reine und Angewandte Mathematik. [Crelle's
              Journal]},
    VOLUME = {780},
      YEAR = {2021},
     PAGES = {221--264},
      ISSN = {0075-4102,1435-5345},
   MRCLASS = {53C21 (57S12)},
  MRNUMBER = {4333977},
       DOI = {10.1515/crelle-2021-0035},
       URL = {https://doi.org/10.1515/crelle-2021-0035},
}

@article {dong_escher_searle,
    AUTHOR = {Dong, Zheting and Escher, Christine and Searle, Catherine},
     TITLE = {Almost isotropy-maximal manifolds of non-negative curvature},
   JOURNAL = {Trans. Amer. Math. Soc.},
  FJOURNAL = {Transactions of the American Mathematical Society},
    VOLUME = {377},
      YEAR = {2024},
    NUMBER = {7},
     PAGES = {4621--4645},
      ISSN = {0002-9947,1088-6850},
   MRCLASS = {53C21 (57R91 57S12)},
  MRNUMBER = {4778058},
       DOI = {10.1090/tran/9100},
       URL = {https://doi.org/10.1090/tran/9100},
}

@article {galaz_kerin_radeschi_wiemeler,
    AUTHOR = {Galaz-Garc\'ia, Fernando and Kerin, Martin and Radeschi, Marco
              and Wiemeler, Michael},
     TITLE = {Torus orbifolds, slice-maximal torus actions, and rational
              ellipticity},
   JOURNAL = {Int. Math. Res. Not. IMRN},
  FJOURNAL = {International Mathematics Research Notices. IMRN},
      YEAR = {2018},
    NUMBER = {18},
     PAGES = {5786--5822},
      ISSN = {1073-7928,1687-0247},
   MRCLASS = {57S15 (55P62 57R18)},
  MRNUMBER = {3862119},
MRREVIEWER = {Anthony\ P.\ Bahri},
       DOI = {10.1093/imrn/rnx064},
       URL = {https://doi.org/10.1093/imrn/rnx064},
}

@article {galaz_kerin_radeschi,
    AUTHOR = {Galaz-Garc\'ia, F. and Kerin, M. and Radeschi, M.},
     TITLE = {Torus actions on rationally elliptic manifolds},
   JOURNAL = {Math. Z.},
  FJOURNAL = {Mathematische Zeitschrift},
    VOLUME = {297},
      YEAR = {2021},
    NUMBER = {1-2},
     PAGES = {197--221},
      ISSN = {0025-5874,1432-1823},
   MRCLASS = {55P62 (57R91 57S15)},
  MRNUMBER = {4204690},
MRREVIEWER = {Shintar\^o\ Kuroki},
       DOI = {10.1007/s00209-020-02508-6},
       URL = {https://doi.org/10.1007/s00209-020-02508-6},
}

@incollection {audin_paper,
    AUTHOR = {Audin, Mich\`ele},
     TITLE = {Hamiltoniens p\'{e}riodiques sur les vari\'{e}t\'{e}s
              symplectiques compactes de dimension {$4$}},
 BOOKTITLE = {G\'{e}om\'{e}trie symplectique et m\'{e}canique ({L}a {G}rande
              {M}otte, 1988)},
    SERIES = {Lecture Notes in Math.},
    VOLUME = {1416},
     PAGES = {1--25},
 PUBLISHER = {Springer, Berlin},
      YEAR = {1990},
      ISBN = {3-540-52191-7},
   MRCLASS = {57R15 (53C15 58F05)},
  MRNUMBER = {1047474},
MRREVIEWER = {Jair\ Koiller},
       DOI = {10.1007/BFb0097462},
       URL = {https://doi.org/10.1007/BFb0097462},
}

@article {ahara_hattori,
    AUTHOR = {Ahara, K. and Hattori, A.},
     TITLE = {{$4$}-dimensional symplectic {$S^1$}-manifolds admitting
              moment map},
   JOURNAL = {J. Fac. Sci. Univ. Tokyo Sect. IA Math.},
  FJOURNAL = {Journal of the Faculty of Science. University of Tokyo.
              Section IA. Mathematics},
    VOLUME = {38},
      YEAR = {1991},
    NUMBER = {2},
     PAGES = {251--298},
      ISSN = {0040-8980},
   MRCLASS = {58F05 (53C15 57R19 57R91 57R95)},
  MRNUMBER = {1127083},
MRREVIEWER = {Charles-Michel\ Marle},
}

@article {wiemeler,
    AUTHOR = {Wiemeler, Michael},
     TITLE = {Torus manifolds and non-negative curvature},
   JOURNAL = {J. Lond. Math. Soc. (2)},
  FJOURNAL = {Journal of the London Mathematical Society. Second Series},
    VOLUME = {91},
      YEAR = {2015},
    NUMBER = {3},
     PAGES = {667--692},
      ISSN = {0024-6107,1469-7750},
   MRCLASS = {57S15 (53C20)},
  MRNUMBER = {3355120},
MRREVIEWER = {Jin-Hong\ Kim},
       DOI = {10.1112/jlms/jdv008},
       URL = {https://doi.org/10.1112/jlms/jdv008},
}

@book {ggk_book,
    AUTHOR = {Guillemin, Victor and Ginzburg, Viktor and Karshon, Yael},
     TITLE = {Moment maps, cobordisms, and {H}amiltonian group actions},
    SERIES = {Mathematical Surveys and Monographs},
    VOLUME = {98},
      NOTE = {Appendix J by Maxim Braverman},
 PUBLISHER = {American Mathematical Society, Providence, RI},
      YEAR = {2002},
     PAGES = {viii+350},
      ISBN = {0-8218-0502-9},
   MRCLASS = {53D20 (53-02 53D50 57R90 57S15)},
  MRNUMBER = {1929136},
MRREVIEWER = {Tara\ S.\ Holm},
       DOI = {10.1090/surv/098},
       URL = {https://doi.org/10.1090/surv/098},
}

@article {grove_searle,
    AUTHOR = {Grove, Karsten and Searle, Catherine},
     TITLE = {Positively curved manifolds with maximal symmetry-rank},
   JOURNAL = {J. Pure Appl. Algebra},
  FJOURNAL = {Journal of Pure and Applied Algebra},
    VOLUME = {91},
      YEAR = {1994},
    NUMBER = {1-3},
     PAGES = {137--142},
      ISSN = {0022-4049,1873-1376},
   MRCLASS = {53C21 (53C20 57S15)},
  MRNUMBER = {1255926},
       DOI = {10.1016/0022-4049(94)90138-4},
       URL = {https://doi.org/10.1016/0022-4049(94)90138-4},
}

@article {galaz_searle,
    AUTHOR = {Galaz-Garcia, Fernando and Searle, Catherine},
     TITLE = {Low-dimensional manifolds with non-negative curvature and
              maximal symmetry rank},
   JOURNAL = {Proc. Amer. Math. Soc.},
  FJOURNAL = {Proceedings of the American Mathematical Society},
    VOLUME = {139},
      YEAR = {2011},
    NUMBER = {7},
     PAGES = {2559--2564},
      ISSN = {0002-9939,1088-6826},
   MRCLASS = {53C20 (53C21 57S25)},
  MRNUMBER = {2784821},
MRREVIEWER = {David\ J.\ Wraith},
       DOI = {10.1090/S0002-9939-2010-10655-X},
       URL = {https://doi.org/10.1090/S0002-9939-2010-10655-X},
}

@article {lvm_1,
    AUTHOR = {L\'opez de Medrano, Santiago and Verjovsky, Alberto},
     TITLE = {A new family of complex, compact, non-symplectic manifolds},
   JOURNAL = {Bol. Soc. Brasil. Mat. (N.S.)},
  FJOURNAL = {Boletim da Sociedade Brasileira de Matem\'atica. Nova S\'erie},
    VOLUME = {28},
      YEAR = {1997},
    NUMBER = {2},
     PAGES = {253--269},
      ISSN = {0100-3569},
   MRCLASS = {32J18 (32G05 57R22)},
  MRNUMBER = {1479504},
MRREVIEWER = {G.\ K.\ Sankaran},
       DOI = {10.1007/BF01233394},
       URL = {https://doi.org/10.1007/BF01233394},
}

@article {lvm_2,
    AUTHOR = {Meersseman, Laurent},
     TITLE = {A new geometric construction of compact complex manifolds in
              any dimension},
   JOURNAL = {Math. Ann.},
  FJOURNAL = {Mathematische Annalen},
    VOLUME = {317},
      YEAR = {2000},
    NUMBER = {1},
     PAGES = {79--115},
      ISSN = {0025-5831,1432-1807},
   MRCLASS = {32J18 (32G05 32Q55)},
  MRNUMBER = {1760670},
MRREVIEWER = {G.\ K.\ Sankaran},
       DOI = {10.1007/s002080050360},
       URL = {https://doi.org/10.1007/s002080050360},
}

@article {lvmb,
    AUTHOR = {Bosio, Fr\'ed\'eric},
     TITLE = {Vari\'et\'es complexes compactes: une g\'en\'eralisation de la
              construction de {M}eersseman et {L}\'opez de
              {M}edrano-{V}erjovsky},
   JOURNAL = {Ann. Inst. Fourier (Grenoble)},
  FJOURNAL = {Universit\'e{} de Grenoble. Annales de l'Institut Fourier},
    VOLUME = {51},
      YEAR = {2001},
    NUMBER = {5},
     PAGES = {1259--1297},
      ISSN = {0373-0956,1777-5310},
   MRCLASS = {32J18 (32M05)},
  MRNUMBER = {1860666},
MRREVIEWER = {Jean-Jacques\ Loeb},
       DOI = {10.5802/aif.1855},
       URL = {https://doi.org/10.5802/aif.1855},
}

@incollection {cylinder_momentum_map,
    AUTHOR = {Condevaux, M. and Dazord, P. and Molino, P.},
     TITLE = {G\'{e}om\'{e}trie du moment},
 BOOKTITLE = {Travaux du {S}\'{e}minaire {S}ud-{R}hodanien de
              {G}\'{e}om\'{e}trie, {I}},
    SERIES = {Publ. D\'{e}p. Math. Nouvelle S\'{e}r. B},
    VOLUME = {88-1},
     PAGES = {131--160},
 PUBLISHER = {Univ. Claude-Bernard, Lyon},
      YEAR = {1988},
   MRCLASS = {58F05},
  MRNUMBER = {1040871},
}

@book {ratiu_ortega_book,
    AUTHOR = {Ortega, Juan-Pablo and Ratiu, Tudor S.},
     TITLE = {Momentum maps and {H}amiltonian reduction},
    SERIES = {Progress in Mathematics},
    VOLUME = {222},
 PUBLISHER = {Birkh\"{a}user Boston, Inc., Boston, MA},
      YEAR = {2004},
     PAGES = {xxxiv+497},
      ISBN = {0-8176-4307-9},
   MRCLASS = {53D20 (37J05 37J15 70G45 70H33)},
  MRNUMBER = {2021152},
MRREVIEWER = {Charles-Michel\ Marle},
       DOI = {10.1007/978-1-4757-3811-7},
       URL = {https://doi.org/10.1007/978-1-4757-3811-7},
}

\end{document}